\providecommand{\U}[1]{\protect\rule{.1in}{.1in}}
\newtheorem{teor}{Theorem}
\newtheorem{cor}{Corollary}
\newtheorem{prop}{Proposition}
\newtheorem{con}{Conjecture}
\newtheorem{lem}{Lemma}
\theoremstyle{definition}
\newtheorem*{rem}{Remark}
\renewcommand{\subjclassname}{AMS \textup{2010} Mathematics Subject
Classification\ }
\email{bayon@uniovi.es}
\email{grau@uniovi.es}
\email{oller@unizar.es}
\email{mruiz@uniovi.es}
\email{pedrosr@uniovi.es}
\begin{document}
\author{L. Bayón}
\address{Departamento de Matemáticas, Universidad de Oviedo\\
Avda. Calvo Sotelo s/n, 33007 Oviedo, Spain}
\author{J. Grau}
\address{Departamento de Matemáticas, Universidad de Oviedo\\
Avda. Calvo Sotelo s/n, 33007 Oviedo, Spain}
\author{A. M. Oller-Marcén}
\address{Centro Universitario de la Defensa de Zaragoza\\
Ctra. Huesca s/n, 50090 Zaragoza, Spain}
\author{M. Ruiz}
\address{Departamento de Matemáticas, Universidad de Oviedo\\
Avda. Calvo Sotelo s/n, 33007 Oviedo, Spain}
\author{P.M. Suárez}
\address{Departamento de Matemáticas, Universidad de Oviedo\\
Avda. Calvo Sotelo s/n, 33007 Oviedo, Spain}
\title{A variant of the Secretary Problem: the Best or the Worst}

\begin{abstract}

We consider a variant of the secretary problem in which
the candidates state their expected salary at the interview, which we assume
is in accordance with their qualifications. The goal is for the employer to
hire the best or the worst (cheapest), indifferent between the two cases. We focus on the complete information variant as well as on the
cases when the number of applicants is a random variable with a uniform
distribution $U[1,n]$ or with a Poisson distribution of parameter $\lambda$.
Moreover, we also study two variants of the original problem in which we
consider payoffs depending on the number of conducted interviews.

\end{abstract}
\maketitle
\keywords{Keywords: Secretary problem, Combinatorial Optimization}

\subjclassname{60G40, 62L15}

\section{Preliminaries}

The optimal stopping problem, known as the classical \emph{secretary problem},
can be stated as follows: an employer is willing to hire the best secretary
out of $n$ rankable candidates for a position. The candidates are interviewed
one by one in a random order. A decision about each particular candidate is to
be made immediately after the interview. Once rejected, a candidate cannot be
recalled. During the interview, the administrator can rank the applicant among
all applicants interviewed so far, but is unaware of the quality of yet unseen
applicants. The question is about the optimal strategy (stopping rule) to
maximize the probability of selecting the best candidate. The secretary
problem is one of many names for a famous problem of optimal stopping theory. It is
also known as the marriage problem, the sultan's dowry problem, the fussy
suitor problem and others. This problem has been extensively studied in the fields of applied probability,
statistics, and decision theory and has been considered by many authors (see
\cite{FER}, \cite{FER2} and \cite{2009} for an extensive bibliography). It can also  be posed as a decision-taking
problem in a game with the following rules:

\begin{enumerate}
\item You want to choose one object.
\item The number of objects is known.
\item The objects appear sequentially in random order.
\item The objects are rankable.
\item Each object is accepted or rejected before the next object appears.
\item The decision depends only on the relative ranks.
\item Rejected objects cannot be recalled.
\item Payoff: You only win if the best object is chosen.
\end{enumerate}

This problem has a very elegant solution. Dynkin
\cite{48} and Lindley \cite{101} independently proved that the best strategy consists in
observing roughly $n/e$ of the candidates and then choosing the first one that
is better than all those observed so far. This strategy returns the best
candidate with a probability of at least $1/e$, this being its approximate
value for large values of $n$. This well-known solution was refined by Gilbert
and Mosteller in \cite{gil}, where they show that $\left[  (n-\frac{1}%
{2})e^{-1}+\frac{1}{2}\right]  $ is a better approximation than $[n/e]$,
although the difference is never greater than 1. There are other variants of the
secretary problem that also have simple, elegant solutions. In the ``postdoc''
problem, for instance, the desire to pick the best is replaced by the desire to
pick the second best (because, according to Vanderbei \cite{posdoc}, the
``best'' will go to Harvard). For this problem, the probability of success for
an even number of applicants is exactly $\frac{n}{4(n-1)}$. This probability
tends to 1/4 as $n$ tends to infinity, illustrating the fact that it is easier
to pick the best than the second best.

If the number of candidates is unknown, the observer faces an additional
risk. If he rejects any candidate, he may then discover that it was the last one,
in which case he receives nothing at all. In \cite{sonin} the case in
which the number of candidates is a random variable with a discrete uniform
distribution $U[1,n]$ was studied. In this case, the cutoff value for large $N$ is
approximately $Ne^{-1/2}$ and the probability of success is $2e^{-2}$. This
same paper also tackles the problem assuming that the number of candidates is
Poisson distributed with parameter $\lambda$.

Another interesting variant of the problem was introduced by Bearden \cite{KK}, considering a positive payoff that increases
with the number interviews. In this case, the optimal cutoff value
is not proportional to the number of candidates and it is the square root of such number.

In the present paper, we consider a variant of the secretary problem in which
the candidates state their expected salary at the interview, which we assume
is in accordance with their qualifications. The goal is for the employer to
hire the best or the worst (cheapest), indifferent between the two cases. In
other words, it is the classical problem in which rule 8 above is replaced by:
``Payoff: You only win if the best or the worst is chosen''. The problem is
studied in Section 2 in its version with full information regarding the number
of candidates. In Section 3, we consider the problem if the number of
candidates is a random variable with a uniform distribution $U[1,n]$. In Section 4,
the number of candidates is a random variable with a Poisson distribution
of parameter $\lambda$. In Section 5, we consider payoffs other than the
classical binary payoff that are dependent on the number of interviews carried
out. In one case, we consider payoff proportional to the number of interviews
and in another, payoff proportional to the number of interviews not carried
out.

The techniques used throughout the paper are suitable adaptations of
those employed in the study of the solution to the classical secretary problem
(limit of Riemann integrals, expansions at
infinity, etc.) to obtain asymptotic formulas of the type $\alpha\cdot n+\beta+o(1)$
for the optimal cutoff value. In some proofs related to the calculation of
some limits, details have been omitted and their correctness has been tested
with the powerful symbolic computation tool Mathematica. Several of the
results are formulated in terms of $W_{0}$ and $W_{-1}$ functions, the lower real branches of the Lambert $W$ function,
respectively; see Corless et al. \cite{lam} for further details.

\section{A variation of the secretary problem: the best or the worst}

In this section, we study a variation of the classical secretary problem
considering that the success of the player resides in choosing the worst or
the best object, indifferent between the two cases. In this case, as in the secretary problem, the strategies to consider
consist in rejecting a certain number of objects (cutoff value) and then
taking the first object that is better or worse than all those previously
rejected. After rejecting $r$ objects, the probability of successfully
choosing the $k$-th inspected object, $\mathcal{P}(k)$, is that of this object
being the best or the worst and that of not having interrupted the inspections
by previously choosing another object. Given that both are independent events,
the probability of success will be the product of the probability of both
events. The probability that the $k$-th object to be seen is the best or worst
of all the objects is $2/n$ while the probability that the best and the worst
among those seen prior to the $k$-th is among the first $r$ objects (i.e. the
inspections have not ceased before seeing the $k$-th object) is
\[
\frac{\binom{r}{2}}{\binom{k-1}{2}}=\frac{r(r-1)}{(k-1)(k-2)}.
\]

Thus, provided $1<r<k\leq n$, we have that
\[
\mathcal{P}(k)=\frac{2}{n} \frac{\binom{r}{2}}{\binom{k-1}{2}}
\]
and the probability of success, rejecting the first $r$ objects, and accepting
afterwards the first one which is either better than all the previous ones or
worse than all the previous ones, is
\[
P_{n}(r):=\sum_{k = r + 1}^{n}\mathcal{P}(k)=\frac{2\binom{r}{2}}{n} \sum_{k =
r + 1}^{n} \frac{1}{\binom{k-1}{2}}.
\]

Note that for $n>r\in\{0,1\}$, it is straightforward to see that the
probability of success is%

\[
P_{n}(0)=P_{n}(1)=\frac{2}{n}.
\]

\begin{teor}
Given a positive integer $n>2$, consider the function
\[
P_{n}(r):=\frac{2\binom{r}{2}}{n}\sum_{k=r+1}^{n}\frac{1}{\binom{k-1}{2}}%
\]
defined for every integer $r\in[2,n-1]$. Let us denote by $\mathcal{M}(n)$ the
value for which the function $P_{n}$ reaches its maximum. Then,

\begin{itemize}
\item[i)] $P_{n}(r)=\displaystyle{\frac{2r(n-r)}{n(n-1)}}$.
\item[ii)] $\mathcal{M}(n)=\lfloor n/2\rfloor$.
\item[iii)] The maximum of $P_{n}$ is:
\[
\mathfrak{p}_{n}:= P_{n}(\mathcal{M}(n))=\frac{\lfloor\frac{1+n}{2}\rfloor
}{2\lfloor\frac{1+n}{2}\rfloor-1}=%
\begin{cases}
\frac{n}{2(n-1)}, & \text{if $n$ is even};\\
\frac{n+1}{2n}, & \text{if $n$ is odd}.
\end{cases}
\]

\end{itemize}
\end{teor}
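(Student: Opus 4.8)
The plan is to establish the three parts in sequence, since each builds on the previous one.

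First I would prove part (i) by evaluating the sum $\sum_{k=r+1}^{n}\frac{1}{\binom{k-1}{2}}$ in closed form. The key observation is that $\frac{1}{\binom{k-1}{2}}=\frac{2}{(k-1)(k-2)}$ telescopes: writing $\frac{2}{(k-1)(k-2)}=2\left(\frac{1}{k-2}-\frac{1}{k-1}\right)$, the sum collapses to $2\left(\frac{1}{r-1}-\frac{1}{n-1}\right)$. Substituting back gives
\[
P_n(r)=\frac{2\binom{r}{2}}{n}\cdot 2\left(\frac{1}{r-1}-\frac{1}{n-1}\right)=\frac{2r(r-1)}{n}\left(\frac{1}{r-1}-\frac{1}{n-1}\right),
\]
and simplifying the bracket to $\frac{n-r}{(r-1)(n-1)}$ yields $P_n(r)=\frac{2r(n-r)}{n(n-1)}$, as claimed.

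With (i) in hand, parts (ii) and (iii) reduce to a one-variable optimization of $r\mapsto r(n-r)$ over the integers $r\in[2,n-1]$. Treating $r$ as a real variable, $r(n-r)$ is a downward parabola with vertex at $r=n/2$; the prefactor $\frac{2}{n(n-1)}$ is a positive constant, so the maximizer of $P_n$ coincides with that of the parabola. For even $n$, the vertex $n/2$ is an integer lying in the feasible range, so $\mathcal{M}(n)=n/2=\lfloor n/2\rfloor$. For odd $n$, the integer maximizers of $r(n-r)$ are the two symmetric points $r=(n-1)/2$ and $r=(n+1)/2$, which give equal values; selecting the former gives $\mathcal{M}(n)=(n-1)/2=\lfloor n/2\rfloor$. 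This proves (ii), with the caveat that one must verify $\lfloor n/2\rfloor$ lies in $[2,n-1]$, which holds for $n>2$.

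For part (iii), I would substitute $r=\lfloor n/2\rfloor$ into the formula from (i). For even $n=2m$, $P_n(m)=\frac{2m\cdot m}{2m(2m-1)}=\frac{m}{2m-1}=\frac{n}{2(n-1)}$; for odd $n=2m+1$, $P_n(m)=\frac{2m(m+1)}{(2m+1)(2m)}=\frac{m+1}{2m+1}=\frac{n+1}{2n}$. To match the unified expression $\frac{\lfloor(1+n)/2\rfloor}{2\lfloor(1+n)/2\rfloor-1}$, I would note $\lfloor(1+n)/2\rfloor=m$ when $n=2m$ and $=m+1$ when $n=2m+1$, and check both cases agree with the piecewise values. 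I anticipate no serious obstacle here: the only point demanding care is the odd case, where one must confirm that the two candidate maximizers yield identical payoffs (so that the choice of $\lfloor n/2\rfloor$ over $\lceil n/2\rceil$ does not affect $\mathfrak{p}_n$) and that the floor-of-$(1+n)/2$ bookkeeping is consistent.
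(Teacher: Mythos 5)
Your proposal is correct and follows essentially the same route as the paper: part (i) by the identical telescoping of $\frac{2}{(k-1)(k-2)}$, part (ii) by exploiting the parabola structure of $r\mapsto\frac{2r(n-r)}{n(n-1)}$ (you argue via the vertex at $n/2$ and integer symmetry, the paper via the sign of the forward difference $P_n(r+1)-P_n(r)$, which is the same idea), and part (iii) by the same even/odd substitution and floor bookkeeping. The only blemish, which the paper's own proof shares, is the claim that $\lfloor n/2\rfloor$ always lies in $[2,n-1]$: for $n=3$ it does not, though the maximum value in part (iii) is unaffected since $P_3(1)=P_3(2)$.
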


\begin{proof}
\

\begin{itemize}
\item[i)] First of all, observe that
\[
P_{n}(r)=\frac{2\binom{r}{2}}{n}\sum_{k=r+1}^{n}\frac{1}{\binom{k-1}{2}}%
=\frac{r(r-1)}{n}\sum_{k=r+1}^{n}\frac{2}{(k-1)(k-2)}.
\]
Now, using telescopic sums, we have that
\begin{align*}
\sum_{k=r+1}^{n}\frac{2}{(k-1)(k-2)}  &  =2\sum_{k=r+1}^{n}\left(  \frac
{1}{k-2}-\frac{1}{k-1}\right)  =2\left(  \frac{1}{r-1}-\frac{1}{n-1}\right)
=\\
&  =\frac{2(n-r)}{(r-1)(n-1)}%
\end{align*}
and the result follows.

\item[ii)] Since $P_{n}(r)=-\frac{2}{n(n-1)}r^{2}+\frac{2}{(n-1)}r$ is the
equation of a parabola in the variable $r$, it is clear that
\[
\mathcal{M}(n)=\min\left\{  r\in[2,n-1]:P_{n}(r)\geq P_{n}(r+1)\right\}  .
\]
Now,
\[
P_{n}(r+1)-P_{n}(r)=\frac{2}{n(n-1)}(n-2r-1)
\]
so it follows that
\[
P_{n}(r+1)-P_{n}(r)\leq0\Leftrightarrow(n-2r-1)\leq0\Leftrightarrow r\geq
\frac{n-1}{2}.
\]
Consequently,
\[
\mathcal{M}(n)=\min\left\{  r\in[2,n-1]:r\geq\frac{n-1}{2}\right\}  =\lfloor
n/2\rfloor
\]
as claimed.

\item[iii)] It is enough to apply the previous result.

If $n$ is even, then $n=2N$ and
\[
P_{n}(\mathcal{M}(n))=P_{n}(N)=\frac{2N(n-N)}{n(n-1)}=\frac{2N^{2}}%
{2N(2N-1)}=\frac{N}{2N-1}%
\]
Moreover, in this case
\[
\left\lfloor \frac{1+n}{2}\right\rfloor =\left\lfloor \frac{1+2N}%
{2}\right\rfloor =N
\]
so it follows that
\[
P_{n}(\mathcal{M}(n))=\frac{N}{2N-1}=\frac{\left\lfloor \frac{1+n}%
{2}\right\rfloor }{2\left\lfloor \frac{1+n}{2}\right\rfloor -1}%
\]
as claimed.

Otherwise, if $n$ is odd, then $n=2N+1$ and
\[
P_{n}(\mathcal{M}(n))=P_{n}(N)=\frac{2N(n-N)}{n(n-1)}=\frac{2N(2N+1-N)}%
{(2N+1)2N}=\frac{N+1}{2N+1}.
\]
In this case
\[
\left\lfloor \frac{1+n}{2}\right\rfloor =\left\lfloor \frac{1+2N+1}%
{2}\right\rfloor =N+1
\]
so we also have that
\[
P_{n}(\mathcal{M}(n))=\frac{N+1}{2N+1}=\frac{\left\lfloor \frac{1+n}%
{2}\right\rfloor }{2\left\lfloor \frac{1+n}{2}\right\rfloor -1}%
\]
and the proof is complete.
\end{itemize}
\end{proof}

It can be deduced from this result that, for $n>2$, the strategy that
maximizes the probability of success consists in rejecting the first
$\lfloor\frac{n}{2}\rfloor$ (optimal cutoff value) objects and then choosing
the first to appear that is better or worse than all the preceding ones. The
probability of success following this strategy is
\[
\frac{\lfloor\frac{1+n}{2}\rfloor}{2\lfloor\frac{1+n}{2}\rfloor-1}%
\]

In the cases $n\in\{1,2\}$, it is evident that an optimal cutoff value is
$r=0$, i.e. to accept the first object that we are shown; the value of the
probability of success being 1 in both cases: $\mathfrak{p}_{1}=\mathfrak{p}%
_{2}=1$

\section{Unknown number of candidates with uniform distribution $U[1,n]$}

In this section, we analyze the case in which the number of objects is
unknown, but it is known that it can be anywhere between $1$ and $n$ with
equal probability. As we saw in Theorem 1 i), the probability of success with
$i>1$ objects while rejecting the first $r$ observed objects is $P_{i}%
(r)=\frac{2r(i-r)}{i(i-1)}$. Hence, the probability of success in this case
will be the mean probability of the $n$ equally probable possible scenarios
with probability $1/n$. That is, the probability of success when rejecting the
$r$ first objects before comparing with the previous ones and with an unknown
number of candidates that follows a uniform distribution $U[1,n]$ will be:%

\[
P(r,n):= \sum_{i=r+1}^{n} \frac{P_{i}(r)}{n}= \sum_{i=r+1}^{n} \frac
{2r(i-r)}{i(i-1)n}%
\]

We denote by $\mathfrak{m}(n)$ the value for which $P(\cdot,n)$ reaches its
maximum value in $[1,n]$ and by $\mathbf{P}(n)$, the maximum value it reaches,
i.e. $\mathbf{P}(n):=P(\mathfrak{m}(n),n)$. We shall first prove that
the probability of success in the game, $\mathbf{P}(n)$, is strictly
decreasing for $n>1$, where clearly $\mathbf{P}(1)=\mathbf{P}(2)=1$ and
$\mathfrak{m}(1)=\mathfrak{m}(2)=0$. The technique used in the proof
is an adaptation of the strategy-stealing argument: starting from any given
strategy in the game with a number of objects following a uniform distribution
$[1,n+1]$, two strategies are developed for the game with a number of objects
following a uniform distribution $[1,n]$, one or other of which necessarily
improves the initial probability of success. More specifically, we show that
if $r$ is the optimal cutoff value for the problem associated with a uniform
distribution $[1,n+1]$, then a higher probability of success is achieved in
the problem associated with a uniform distribution $[1,n]$ with either $r$ or
$r-1$ as the cutoff value.

\begin{lem}
\label{L:in} Consider the function
\[
P(r,n):=\sum_{i=r+1}^{n}\frac{2(i-r)r}{i(i-1)n}%
\]
defined for every pair of integers $(r,n)$ with $0<r<n$. Then, one of the
following inequalities holds:
\begin{align*}
P(r,n+1)  &  <P(r-1,n),\\
P(r,n+1)  &  <P(r,n).
\end{align*}
\end{lem}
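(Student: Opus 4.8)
The plan is to reduce everything to a single real parameter and a sign dichotomy. First I would record the identity that follows immediately from the definition by splitting off the last term of the sum defining $P(r,n+1)$:
\[
P(r,n+1)=\frac{n\,P(r,n)+P_{n+1}(r)}{n+1},\qquad P_{n+1}(r)=\frac{2r(n+1-r)}{n(n+1)} ,
\]
where $P_{n+1}(r)$ is the known-number success probability from Theorem~1~i). Subtracting gives $P(r,n)-P(r,n+1)=\frac{1}{n+1}\bigl(P(r,n)-P_{n+1}(r)\bigr)$, so the second inequality of the lemma holds precisely when $P(r,n)>P_{n+1}(r)$. It therefore suffices to show that whenever this fails, the first inequality holds instead.

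To make both comparisons computable I would obtain a closed form for $P(r,n)$. Using the partial fraction decomposition $\frac{i-r}{i(i-1)}=\frac{r}{i}+\frac{1-r}{i-1}$ and telescoping the resulting harmonic sums yields
\[
P(r,n)=\frac{2r}{n}\Bigl(H_{n-1}-H_{r-1}+\frac{r}{n}-1\Bigr),
\]
where $H_m$ denotes the $m$-th harmonic number. The crucial observation is that the harmonic numbers enter every quantity in sight only through the single combination $X:=H_{n-1}-H_{r-1}-2$. Writing $D_2:=P(r,n)-P(r,n+1)$ and $D_1:=P(r-1,n)-P(r,n+1)$, and computing the discrete difference $P(r,n)-P(r-1,n)$ in the same telescoping fashion (valid for $r\ge 2$, so that $P(r-1,n)$ lies in the stated domain), a direct calculation should give the affine expressions
\[
D_2=\frac{2r}{n(n+1)}\,(X+\tau),\qquad D_1=-\frac{2(n+1-r)}{n(n+1)}\,X+C,
\]
with $\tau:=\frac{r}{n}+\frac{r}{n+1}>0$ and $C$ a quantity rational in $r,n$.

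With this in hand the argument closes by a dichotomy in $X$. If $X>-\tau$, equivalently $P(r,n)>P_{n+1}(r)$, then $D_2>0$ and the second inequality holds. In the remaining case $X\le-\tau$, the coefficient of $X$ in $D_1$ is negative (because $0<r<n$ forces $n+1-r>0$), so $D_1$ is a decreasing affine function of $X$ and is therefore at least its value at the threshold $X=-\tau$. Evaluating there and simplifying should collapse $C$ and the boundary contribution to
\[
D_1\ \ge\ \frac{2(n+1-r)}{n^{2}(n+1)}\ >\ 0 ,
\]
so the first inequality holds in this case. Hence in every case at least one of the two inequalities is satisfied.

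I expect the main obstacle to be the bookkeeping of the second paragraph: verifying the telescoping that produces the clean closed form, and then carrying out the discrete-difference computation for $P(r,n)-P(r-1,n)$ so that all harmonic contributions genuinely assemble into the single parameter $X$, leaving the final simplification to produce the manifestly positive quantity $\frac{2(n+1-r)}{n^{2}(n+1)}$. A secondary point to watch is the boundary $r=1$: there $P(r-1,n)=P(0,n)=0$ makes the first inequality vacuous, so the dichotomy is meaningful only for $r\ge 2$, consistent with $P(r-1,n)$ being defined on $0<r-1<n$.
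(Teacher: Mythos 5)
Your proposal is correct and is essentially the paper's own argument: both derive the closed form $P(r,n)=\frac{2r}{n}\left(H_{n-1}-H_{r-1}+\frac{r}{n}-1\right)$ and exploit that the two differences are affine functions of the harmonic sum with slopes of opposite sign, your dichotomy at the threshold $X=-\tau$ (where $D_1$ evaluates to $\frac{2(n+1-r)}{n^{2}(n+1)}>0$, which I have checked) being just the quantitative restatement of the paper's contradiction $A+B=-(n+1)<0$. Your closing caveat restricting to $r\geq 2$ matches a restriction the paper makes only silently (its expansion of $P(r-1,n)$ contains the term $\frac{1}{r-1}$), so on that boundary point your write-up is, if anything, more careful than the original.
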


\begin{proof}
Let us denote $H(n,r):=\displaystyle\sum_{i=r}^{n-1}\frac{1}{i}$. Then we have
that
\begin{align*}
P(r,n)  &  =\sum_{i=r+1}^{n}\frac{2(i-r)r}{i(i-1)n}=\frac{2r}{n}\sum
_{i=r+1}^{n}\left(  \frac{r}{i}+\frac{1}{i-1}+\frac{r}{i-1}\right)  =\\
&  =\frac{2r}{n}\left[  \left(  \frac{r}{n}-1\right)  +\sum_{i=r+1}^{n}%
\frac{1}{i-1}\right]  =\frac{2r}{n}\left[  \left(  \frac{r}{n}-1\right)
+H(n,r)\right]  .
\end{align*}

In the same way we find that
\begin{align*}
P(r,n+1)  &  =\frac{2r}{n+1}\left[  \left(  \frac{r}{n+1}-1+\frac{1}%
{n}\right)  +H(n,r)\right]  ,\\
P(r-1,n)  &  =\frac{2(r-1)}{n}\left[  \left(  \frac{r-1}{n}-1+\frac{1}%
{r-1}\right)  +H(n,r)\right]
\end{align*}

As a consequence, it follows that
\begin{align*}
P(r,n+1)-P(r-1,n)  &  =\frac{2(1+n-r)}{n(n+1)}\left[  -\frac{(1+2n)(1+n-r)}%
{n(n+1)}+H(n,r)\right]  ,\\
P(r,n+1)-P(r,n)  &  =\frac{2r}{n(n+1)}\left[  \frac{2n(1+n-r)-r}%
{n(n+1)}-H(n,r)\right]
\end{align*}

Thus, if we assume that both
\[
P(r,n+1)-P(r-1,n)\geq0\ \text{and}\ P(r,n+1)-P(r,n)\geq0,
\]
it follows that
\begin{align*}
A  &  :=H(n,r)n(n+1)-(1+2n)(1+n-r)\geq0,\\
B  &  :=-\left(  H(n,r)n\,\left(  n+1\right)  \right)  +2n\left(
1+n-r\right)  -r\geq0.
\end{align*}

But in this case $0\leq A+B=-1-n<0$, a contradiction, and hence the result.
\end{proof}

\begin{cor}
\label{C:dec} With the previous notation, $\mathbf{P}(n+1)<\mathbf{P}(n)$ for
every $n>1$.
\end{cor}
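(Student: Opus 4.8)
The plan is to derive the corollary directly from Lemma \ref{L:in}. The key observation is that $\mathbf{P}(n)$ is defined as the maximum of $P(\cdot,n)$ over admissible cutoff values, so I want to compare the optimal value for the $(n+1)$-game with achievable values in the $n$-game. Let $r := \mathfrak{m}(n+1)$ be the optimal cutoff for the uniform $[1,n+1]$ problem, so that $\mathbf{P}(n+1) = P(r, n+1)$.

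First I would dispose of the boundary case $r \in \{0,1\}$ separately, since Lemma \ref{L:in} requires $0 < r < n$ (and the comparison with $P(r-1,n)$ only makes sense for $r \geq 1$). For the generic case $r \geq 2$ with $r < n+1$, the hypotheses of the lemma are satisfied, so one of the two inequalities $P(r,n+1) < P(r-1,n)$ or $P(r,n+1) < P(r,n)$ must hold. In either case the right-hand side is the value of $P(\cdot,n)$ at an admissible cutoff ($r-1$ or $r$, both lying in the valid range for the $n$-game), and hence is bounded above by the maximum $\mathbf{P}(n)$. Therefore
\[
\mathbf{P}(n+1) = P(r,n+1) < \max\{P(r-1,n),\, P(r,n)\} \leq \mathbf{P}(n),
\]
which is exactly the claim. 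This is the strategy-stealing idea already flagged in the text preceding the lemma: the optimal $(n+1)$-strategy, transplanted (with cutoff either unchanged or decremented by one) into the $n$-game, strictly beats its original performance.

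The main obstacle I anticipate is purely bookkeeping at the edges of the range rather than anything substantive. I need to confirm that when $r = \mathfrak{m}(n+1)$ equals $2$, the decremented cutoff $r-1 = 1$ is still a legitimate cutoff value for which $\mathbf{P}(n) \geq P(1,n)$ holds, and to handle the small cases where the optimal cutoff for the $(n+1)$-problem might be $0$ or $1$, where the lemma does not directly apply. Since the text states $\mathbf{P}(1) = \mathbf{P}(2) = 1$ explicitly and the corollary is asserted only for $n > 1$, these degenerate situations either fall outside the claimed range or are covered by direct inspection of the formula $P(r,n) = \frac{2r(n-r)}{n(n-1)}$ from Theorem 1. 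Once these boundary cases are checked, the monotonicity follows immediately from the lemma, so the heart of the work is entirely contained in the already-proven lemma and the corollary is essentially a one-line consequence.
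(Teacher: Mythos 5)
Your proof is correct and follows essentially the same route as the paper's: apply Lemma \ref{L:in} with $r=\mathfrak{m}(n+1)$, note that one of the two inequalities must hold, and bound both $P(r-1,n)$ and $P(r,n)$ by $\mathbf{P}(n)$ via the definition of $\mathfrak{m}(n)$. Your extra attention to the boundary cases $r\in\{0,1\}$ (where the lemma's hypothesis $0<r<n$ fails) is a point of care that the paper's one-line proof silently omits, but it does not change the substance of the argument.
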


\begin{proof}
If $n>1$,
\begin{align*}
\mathbf{P}(n+1)  &  =P(\mathfrak{m}(n+1),n+1)<\max\{P(\mathfrak{m}%
(n+1),n),P(\mathfrak{m}(n+1)-1,n)\}\leq\\
&  \leq P(\mathfrak{m}(n),n)=\mathbf{P}(n),
\end{align*}
where the first inequality follows from Lemma \ref{L:in} and the second holds
by the definition of $\mathfrak{m}(n)$.
\end{proof}

\begin{lem}
\label{L:max} The function $g(x)=-2x\log x-2x(1-x)$ reaches its absolute
maximum in the interval $[0,1]$ at the point
\[
\vartheta:=-\frac{1}{2}W(-\frac{2}{e^{2}})=0.20318786...,
\]
where $W$ denotes Lambert $W$-function. Moreover, the value of this maximum
is:
\[
g(\vartheta)=2(\vartheta-\vartheta^{2})=0.32380511...
\]
\end{lem}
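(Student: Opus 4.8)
The plan is to locate the stationary points of $g$ by elementary calculus and then to express the relevant one through the Lambert $W$ function. First I would differentiate, obtaining
\[
g'(x)=-2\log x+4x-4,
\]
so that the condition $g'(x)=0$ reads $\log x=2x-2$, equivalently $xe^{-2x}=e^{-2}$. Multiplying by $-2$ puts this in Lambert form $(-2x)e^{-2x}=-2/e^{2}$, whence $-2x=W(-2/e^{2})$ and $x=-\tfrac12 W(-2/e^{2})$, which is exactly the claimed $\vartheta$.

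The delicate point is that the argument $-2/e^{2}$ lies in $(-1/e,0)$, so the Lambert equation has two real solutions, one on each branch. I would note that $W_{-1}(-2/e^{2})=-2$ (since $(-2)e^{-2}=-2/e^{2}$), which returns the boundary point $x=1$, while the principal branch gives $W_{0}(-2/e^{2})\in(-1,0)$ and hence the interior point $\vartheta=-\tfrac12 W_{0}(-2/e^{2})\approx 0.2032$. To confirm these are the only stationary points on $(0,\infty)$, I would examine $h(x)=xe^{-2x}$, which increases on $(0,\tfrac12)$ and decreases on $(\tfrac12,\infty)$ with maximum $h(\tfrac12)=1/(2e)>e^{-2}$; hence the level $e^{-2}$ is attained exactly twice, once below and once above $\tfrac12$.

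To single out the absolute maximum, I would compute $g''(x)=-2/x+4$, which is negative at $\vartheta$ (a local maximum) and positive at $x=1$ (a local minimum). Combining this with the boundary values $g(0)=g(1)=0$ (using $x\log x\to 0$ as $x\to0^{+}$) and the fact that $g(\vartheta)>0$, the absolute maximum on $[0,1]$ is attained at $\vartheta$, with the spurious root $x=1$ recognized merely as a boundary minimum.

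Finally, to obtain the maximum value in closed form, I would use the stationarity relation $\log\vartheta=2\vartheta-2$ to eliminate the logarithm: substituting into $g(\vartheta)=-2\vartheta\log\vartheta-2\vartheta(1-\vartheta)$ yields $-2\vartheta(2\vartheta-2)-2\vartheta(1-\vartheta)=2\vartheta-2\vartheta^{2}=2(\vartheta-\vartheta^{2})$, as claimed. The only real obstacle is the branch bookkeeping for $W$; once the principal branch is selected and the extraneous root $x=1$ is dismissed, everything else is a routine verification.
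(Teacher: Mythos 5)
Your proof is correct and takes essentially the same route as the paper's: compute $g'$, identify the stationary points $x=1$ and $x=\vartheta=-\tfrac{1}{2}W_{0}(-2/e^{2})$ via the Lambert $W$ equation, classify them, and use the stationarity relation $\log\vartheta=2\vartheta-2$ to reduce $g(\vartheta)$ to $2(\vartheta-\vartheta^{2})$. The paper compresses all of this into ``an elementary Calculus exercise''; your version merely supplies the branch bookkeeping, the argument that there are only two roots, and the boundary checks that the paper leaves implicit.
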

\begin{proof}
This is an elementary Calculus exercise. The derivative $g^{\prime
}(x)=-4+4x-2\log x=0$ in $[0,1]$ if and only if $x=1$ or $x=-\frac{1}%
{2}W\left(  -\frac{2}{e^{2}}\right)  $. It is easily checked that $x=1$ is a
minimum and that $x=-\frac{1}{2}W\left(  -\frac{2}{e^{2}}\right)  $ is a
maximum and some easy computations conclude the proof.
\end{proof}

\begin{teor}
With all the previous notation, the following hold:
\begin{itemize}
\item[i)] For every positive integer $n$,
\[
1=\mathbf{P}(1)= \mathbf{P}(2)>\mathbf{P}(3)>\cdots>\mathbf{P}(n)>\mathbf{P}%
(n+1)>\cdots>2(\vartheta-\vartheta^{2}).
\]
\item[ii)] $\displaystyle \lim_{n\rightarrow\infty} \frac{\mathfrak{m}(n)}%
{n}=\vartheta$; i.e., $\mathfrak{m}(n) \sim\vartheta n.$
\item[iii)] $\displaystyle \lim_{n\rightarrow\infty} P(\lfloor\vartheta n
\rfloor,n)=\lim_{n\rightarrow\infty}\mathbf{P}(n)=2(\vartheta-\vartheta^{2}) $.
\end{itemize}
\end{teor}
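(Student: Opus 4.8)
The plan is to compare the discrete quantity $P(r,n)$ with the continuous function $g$ of Lemma \ref{L:max}, after which all three parts follow from a single uniform estimate. Writing $x=r/n$, I expect to establish that
\[
P(r,n)=g(r/n)+O(1/n)
\]
uniformly in $r\in\{2,\dots,n-1\}$, where $g(x)=-2x\log x-2x(1-x)=2x\left[(x-1)-\log x\right]$. The starting point is the identity already recorded in the proof of Lemma \ref{L:in}, namely $P(r,n)=\frac{2r}{n}\left[\left(\frac{r}{n}-1\right)+H(n,r)\right]$ with $H(n,r)=\sum_{i=r}^{n-1}\frac1i$. Comparing with the rewritten form of $g$, the whole matter reduces to replacing the harmonic tail $H(n,r)$ by $-\log(r/n)$ with a controlled error.

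To do this I would sandwich the harmonic tail between integrals,
\[
\log\frac{n}{r}\le H(n,r)\le\log\frac{n-1}{r-1}\qquad(2\le r\le n-1),
\]
the lower bound giving at once $P(r,n)\ge g(r/n)$ and the upper bound giving $P(r,n)\le g(r/n)+\frac{2r}{n}\left[\log\left(1-\tfrac1n\right)+\log\left(1+\tfrac1{r-1}\right)\right]$. The correction is then estimated uniformly: the first logarithm contributes $O(1/n)$ because $2r/n\le 2$ and $|\log(1-1/n)|=O(1/n)$, while the second contributes at most $\frac{2r}{n(r-1)}\le\frac{4}{n}$ for $r\ge2$. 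The one point that needs care — and which I regard as the main obstacle — is precisely this uniformity for small $r$: there the term $\log(n/r)$ inside the bracket is large, but it is exactly the term that becomes $-\log x$ in $g(x)$, and the vanishing prefactor $2r/n$ keeps the residual correction $O(1/n)$ across the whole range.

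With the uniform estimate in hand, parts iii) and ii) are immediate. For iii), choosing $r=\lfloor\vartheta n\rfloor$ gives $P(\lfloor\vartheta n\rfloor,n)=g(\lfloor\vartheta n\rfloor/n)+O(1/n)\to g(\vartheta)=2(\vartheta-\vartheta^{2})$ by continuity of $g$; hence $\mathbf{P}(n)\ge P(\lfloor\vartheta n\rfloor,n)$ forces $\liminf\mathbf{P}(n)\ge g(\vartheta)$, while $P(r,n)\le g(r/n)+O(1/n)\le g(\vartheta)+O(1/n)$ (using $\max_{[0,1]}g=g(\vartheta)$ from Lemma \ref{L:max}) forces $\limsup\mathbf{P}(n)\le g(\vartheta)$; the cases $r\in\{0,1\}$ are harmless since $P(0,n)=P(1,n)=2/n\to0$. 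Thus $\mathbf{P}(n)\to2(\vartheta-\vartheta^{2})$ and both limits in iii) coincide. For ii), since $\mathbf{P}(n)\to g(\vartheta)>0$ we have $\mathfrak{m}(n)\ge2$ eventually, so $g(\mathfrak{m}(n)/n)=\mathbf{P}(n)+O(1/n)\to g(\vartheta)$; as $\vartheta$ is the unique maximizer of $g$ on the compact interval $[0,1]$, every convergent subsequence of $\mathfrak{m}(n)/n$ must tend to $\vartheta$, whence $\mathfrak{m}(n)/n\to\vartheta$.

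Finally, part i) combines Corollary \ref{C:dec} with iii). Corollary \ref{C:dec} gives the strict monotonicity $\mathbf{P}(3)>\mathbf{P}(4)>\cdots$, and together with $\mathbf{P}(1)=\mathbf{P}(2)=1$ this produces the displayed chain. Since the sequence $(\mathbf{P}(n))_{n\ge2}$ is strictly decreasing and, by iii), converges to $2(\vartheta-\vartheta^{2})$, every term must exceed this limit, giving the strict lower bound $\mathbf{P}(n)>2(\vartheta-\vartheta^{2})$ for all $n$.
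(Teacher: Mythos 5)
Your proof is correct, and it follows the same basic route as the paper's: both compare $P(r,n)$ with the limit function $g$ of Lemma \ref{L:max}, and both obtain part i) by combining Corollary \ref{C:dec} with the limiting value. The difference is one of rigor rather than strategy, and it is worth noting. The paper passes from $P(r,n)=\frac{2r}{n}\left[\left(\frac{r}{n}-1\right)+H(n,r)\right]$ to $g(x)$ by an informal Riemann-sum limit taken along sequences with $r/n\to x$, and then asserts part ii) ``immediately''; as written, pointwise convergence along such sequences does not by itself transfer the location of the maximizer. Your uniform estimate $P(r,n)=g(r/n)+O(1/n)$ for $2\le r\le n-1$, obtained by sandwiching the harmonic tail $H(n,r)$ between $\log\frac{n}{r}$ and $\log\frac{n-1}{r-1}$, is exactly the strengthening needed: it yields iii) directly, and, combined with compactness of $[0,1]$ and the uniqueness of the maximizer $\vartheta$ (which does follow from the proof of Lemma \ref{L:max}, since $g(0)=g(1)=0<g(\vartheta)$ and $\vartheta$ is the only interior maximum), it gives ii) by the subsequence argument. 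So your write-up supplies the quantitative step the paper leaves implicit, at the modest extra cost of the integral-comparison bounds.

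One small slip, which does not affect the argument: in the uniform model the edge values are not $P(0,n)=P(1,n)=2/n$ (that is the fixed-$n$ quantity $P_n(0)=P_n(1)$ from Section 2). Here the cutoff-$1$ strategy gives $P(1,n)=\frac{1}{n}\sum_{i=2}^{n}\frac{2}{i}=O\!\left(\frac{\log n}{n}\right)$, and similarly for $r=0$. These still tend to $0$, so they remain harmless for your $\limsup$ bound, as claimed.
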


\begin{proof}
\

\begin{itemize}
\item[i)] By Corollary \ref{C:dec}, we know that $\mathbf{P}(2)=1$ and that
$\mathbf{P}(n)$ is decreasing.

Now, if we recall Lemma \ref{L:in}, we can put
\[
P(r,n)=\frac{2r}{n}\left[  \left(  \frac{r}{n}-1\right)  +\sum_{i=r}%
^{n-1}\frac{1}{i}\right]  =\frac{2r}{n}\left(  \frac{r}{n}-1\right)
+\frac{2r}{n}\sum_{i=r}^{n-1}\frac{n}{i}\frac{1}{n},
\]
where the latter sum is a Riemann sum. Consequently,
\[
\lim_{n\rightarrow\infty}P(r,n)\simeq2x(x-1)+2x\int_{x}^{1}\frac{1}%
{t}dt=2x(x-1)-2x\log x=g(x)
\]
with $x=\displaystyle\lim_{n\rightarrow\infty}r/n$.

Finally, since $\mathbf{P}(n)$ is decreasing in $n$ and $2(\vartheta
-\vartheta^{2})$ is the maximum value of $g(x)$ due to Lemma \ref{L:max} it
follows that $\mathbf{P}(n)>2(\vartheta-\vartheta^{2})$ for every integer
$n>1$, as claimed.

\item[ii)] By Lemma \ref{L:max}, $g(x)$ reaches its maximum value at
$\vartheta$. Since $P(\cdot,n)$ reaches its maximum at $\mathfrak{m}(n)$ and
$x=\displaystyle \lim_{n\to\infty} r/n$, it follows immediately that
\[
\lim_{n\rightarrow\infty}\frac{\mathfrak{m}(n)}{n}=\vartheta.
\]

\item[iii)] From the previous work and Lemma \ref{L:max}, it is clear that
\[
\lim_{n\rightarrow\infty}P(\lfloor\vartheta n\rfloor,n)=\lim_{n\rightarrow
\infty}\mathbf{P}(n)=g(\vartheta)=2(\vartheta-\vartheta^{2})
\]
because $\displaystyle\lim_{n\to\infty} \lfloor\vartheta n\rfloor/n=\vartheta$.
\end{itemize}
\end{proof}

From the above theorem, it can be deduced that, for any value of $n$, the
optimal strategy achieves success with a probability greater than
$2(\vartheta-\vartheta^{2})=0.3238\dots$. Moreover, this is the approximate
value of the probability of success for large values of $n$, just by following
the simple strategy of rejecting the nearest integer to $-\frac{n}{2}%
W(-\frac{2}{e^{2}})=n\cdot0.2031\dots$. Thus, $[\vartheta\cdot n]$
constitutes a practical estimation of the optimal value of initial rejections,
just like $[n/e]$ does in the classical secretary problem. This is, in fact, true:
$[n\vartheta]$ is within 1 of the correct answer for all $n$, although many
errors ($20\%$) are made with this estimation:
\[
3, 8, 13, 18, 23, 32, 37, 42, 47, 52, 57, 62, 67, 72, 77, 82, 96, 101, 106,
\ 111, 116, 121,\dots
\]
However, it is also true that the error is negligible if compared with the
probability of the optimal strategy for large values of $n$.

Let us now look at a result that provides a better estimate
for $\mathfrak{m}(n)$ than the one above. However, let us first consider the
following auxiliary result where $\psi$ stands for the digamma function.

\begin{lem}
\label{lem:f} Let us consider the function $f(r,n):=-\frac{2r}{n}+\frac
{2r^{2}}{n^{2}}+\frac{2r}{n}\psi(n)-\frac{2r\log(r)}{n}+\frac{1}{n}$ and let
$\alpha(n)$ be the value for which the function $f(\cdot,n)$ reaches it
maximum in $[1,n]$. Then, $\alpha(n)\approx\mathfrak{m}(n)$.
\end{lem}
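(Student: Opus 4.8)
The plan is to identify $f(r,n)$ as the explicit asymptotic approximation of $P(r,n)$ obtained by writing the harmonic tail through the digamma function and expanding it at infinity; closeness of the two functions will then yield closeness of their maximizers. First I would recall from the computation carried out in the proof of Lemma \ref{L:in} that
\[
P(r,n)=\frac{2r}{n}\left[\frac{r}{n}-1+H(n,r)\right],\qquad H(n,r)=\sum_{i=r}^{n-1}\frac{1}{i}.
\]
Using $\sum_{i=1}^{m}\frac{1}{i}=\psi(m+1)+\gamma$, the harmonic tail becomes $H(n,r)=\psi(n)-\psi(r)$, so that, viewing $r$ as a real variable in $[1,n]$, one has the exact identity
\[
P(r,n)=\frac{2r^{2}}{n^{2}}-\frac{2r}{n}+\frac{2r}{n}\psi(n)-\frac{2r}{n}\psi(r).
\]

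The second step is to insert the classical expansion $\psi(r)=\log r-\frac{1}{2r}+O(r^{-2})$ into the final term. Since $\frac{2r}{n}\left(\log r-\frac{1}{2r}\right)=\frac{2r\log r}{n}-\frac{1}{n}$, this substitution turns the last display into exactly $f(r,n)$, and the discarded remainder is
\[
P(r,n)-f(r,n)=-\frac{2r}{n}\left(\psi(r)-\log r+\frac{1}{2r}\right)=O\!\left(\frac{1}{rn}\right),
\]
uniformly for $r\in[1,n]$. Thus $f(\cdot,n)$ is a uniformly close smooth approximation of $P(\cdot,n)$ over the whole admissible range, which is what justifies replacing the intractable $\psi(r)$ by the elementary $\log r$ when locating the optimum.

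The final and most delicate step is to pass from ``$f\approx P$'' to ``the arg-maxima are close'', since proximity of values does not by itself force proximity of maximizers when a function is flat near its peak. To control this I would differentiate in $r$ and check that the remainder is small not merely in value but also in slope: using $\psi'(r)=\frac{1}{r}+O(r^{-2})$, a direct computation gives $\partial_{r}\bigl(P(r,n)-f(r,n)\bigr)=O(r^{-2}n^{-1})$, so the stationarity equations $\partial_{r}P=0$ and $\partial_{r}f=0$ are solved at essentially the same place. Combining this slope estimate with the nondegeneracy of the maximum --- inherited from Lemma \ref{L:max}, where the limiting profile $g$ was shown to have a unique interior maximum at $\vartheta$, so that $P(\cdot,n)$ is strictly unimodal near $r\approx\vartheta n$ for large $n$ --- one concludes that the continuous maximizer $\alpha(n)$ of $f(\cdot,n)$ and the integer maximizer $\mathfrak{m}(n)$ of $P(\cdot,n)$ differ by a vanishing amount, giving the asserted $\alpha(n)\approx\mathfrak{m}(n)$. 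I expect this last comparison to be the main obstacle, because turning the derivative estimate into a genuine bound on $|\alpha(n)-\mathfrak{m}(n)|$ requires a uniform lower bound on $|\partial_{r}^{2}P|$ in a neighborhood of the peak, which must be extracted from the curvature of $g$ at $\vartheta$.
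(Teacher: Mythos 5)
Your reduction coincides with the paper's up to the decisive step: the exact identity $P(r,n)=\frac{2r^{2}}{n^{2}}-\frac{2r}{n}+\frac{2r}{n}\psi(n)-\frac{2r}{n}\psi(r)$ and the substitution $\psi(r)=\log r-\frac{1}{2r}+\epsilon(r)$, giving $P(r,n)=f(r,n)-\frac{2r\epsilon(r)}{n}$, appear verbatim in the paper's proof. Where you genuinely diverge is in passing from closeness of the functions to closeness of their maximizers. The paper argues purely with values: it bounds $|2r\epsilon(r)|\le k$, sandwiches $f-\frac{k}{n}\le P\le f+\frac{k}{n}$, traps $\mathfrak{m}(n)$ in the interval $[\beta_{1}(n),\beta_{2}(n)]=\{r:\,f(\alpha(n),n)-f(r,n)\le \frac{2k}{n}\}$, and concludes that this interval shrinks merely because the gap $\frac{2k}{n}$ tends to $0$. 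You instead compare stationarity equations and invoke curvature, and your stated worry --- that value-closeness alone cannot control maximizers when the peak is flat --- is in fact exactly the unjustified step in the paper's own argument: near the optimum $r\sim\vartheta n$ one has $\partial_{r}^{2}f=\frac{4}{n^{2}}-\frac{2}{rn}\approx\frac{4-2/\vartheta}{n^{2}}$, a curvature of order $n^{-2}$, so a value gap of order $n^{-1}$ only confines the maximizer to an interval of width of order $\sqrt{n}$; the paper's final inference does not follow as written. Your derivative route is the one that actually delivers the strong conclusion: a slope error of order $n^{-1}r^{-2}=O(n^{-3})$ at the peak, measured against curvature bounded below by $c\,n^{-2}$, gives $|\alpha(n)-\mathfrak{m}(n)|=O(1/n)$ (up to the unavoidable integer rounding in $\mathfrak{m}(n)$, which both you and the paper gloss over). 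Two repairs to your sketch: first, to obtain the slope error $O(n^{-1}r^{-2})$ you need $\psi'(r)=\frac{1}{r}+\frac{1}{2r^{2}}+O(r^{-3})$, one term beyond the $\psi'(r)=\frac{1}{r}+O(r^{-2})$ you quote, since the latter only yields $O(n^{-1}r^{-1})$ and hence only $O(1)$ closeness of maximizers; second, the curvature lower bound you flag as the main obstacle need not be extracted from the limiting profile $g$ of Lemma \ref{L:max} by any limiting argument --- it is immediate from the explicit formula $\partial_{r}^{2}f(r,n)=\frac{4}{n^{2}}-\frac{2}{rn}$, the fact that the maximizers divided by $n$ tend to $\vartheta$, and $\vartheta<\frac{1}{2}$, which makes $4-\frac{2}{\vartheta}$ strictly negative.
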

\begin{proof}
If we recall the definition of the digamma function $\psi$, we can write
\[
P(r,n)=-\frac{2r}{n}+\frac{2r^{2}}{n^{2}}+\frac{2r}{n}\psi(n)-\frac{2r}{n}%
\psi(r).
\]
Now, we know that for any integer $r$
\[
\psi(r)=\log r-\frac{1}{2r}+\epsilon(r),
\]
with $\epsilon(r)=\mathcal{O}(1/2r)$ if $r\rightarrow\infty$. Thus,
\[
P(r,n)=-\frac{2r}{n}+\frac{2r^{2}}{n^{2}}+\frac{2r}{n}\psi(n)-\frac{2r\log
(r)}{n}+\frac{1}{n}-\frac{2r\epsilon(r)}{n}=f(r,n)-\frac{2r\epsilon(r)}{n}.
\]

On the other hand, since $r\geq1$, it follows that there exists $k$ such that
$\left\vert 2r\epsilon(r)\right\vert \leq k$ for every $r$ and hence:
\[
f(r,n)-\frac{k}{n}\leq P(r,n)\leq f(r,n)+\frac{k}{n}%
\]

Obviously both functions $g_{1}(r,n):=f(r,n)-\frac{k}{n}$ and $g_{2}
(r,n):=f(r,n)+\frac{k}{n}$ reach their maximum at $\alpha(n)$. Now, let
$\beta_{1}(n)$ and $\beta_{2}(n)$ points such that $g_{2}(\beta_{1}%
(n),n)=g_{2}(\beta_{2}(n),n)=g_{1}(\alpha(n),n)$ and $\alpha(n)\in\left[
\beta_{1}(n),\beta_{2}(n)\right]  $. The inequality above implies that
\[
\beta_{1}(n)\leq\mathfrak{m}(n)\leq\beta_{2}(n)
\]

Finally, for every $r$ we have that $g_{2}(r,n)-g_{1}(r,n)=2k/n$ so it follows
that $\left\vert \beta_{2}(n)-\beta_{1}(n))\right\vert \underset
{n\rightarrow\infty}{\longrightarrow}0$ and, consequently also
\[
\left\vert \alpha(n)-\mathfrak{m}(n)\right\vert \underset{n\rightarrow\infty
}{\longrightarrow} 0.
\]
\end{proof}

\begin{teor}
With all the previous notation, the following hold:
\begin{itemize}
\item[i)] $\displaystyle\mathfrak{m}(n)\approx-\frac{n}{2} W\left(
-\frac{2e^{-2+\psi(n)}}{n}\right)  $, where $\psi$ denotes the digamma function.
\item[ii)] $\displaystyle \mathfrak{m}(n)\approx n \vartheta+\frac
{1}{4-2e^{2-2\vartheta}}$.
\end{itemize}
\end{teor}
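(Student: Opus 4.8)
The plan is to pin down the maximizer $\alpha(n)$ of the auxiliary function $f(\cdot,n)$ from Lemma \ref{lem:f} in closed form and then, using the fact that $\alpha(n)\approx\mathfrak{m}(n)$, to read off part i) directly and part ii) from an asymptotic expansion of that closed form.

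For part i), I would differentiate $f$ with respect to $r$. A routine computation gives
\[
\frac{\partial f}{\partial r}(r,n)=-\frac{4}{n}+\frac{4r}{n^{2}}+\frac{2}{n}\psi(n)-\frac{2\log r}{n},
\]
so an interior critical point satisfies $\log r=-2+\frac{2r}{n}+\psi(n)$. The substitution $u:=-\frac{2r}{n}$ converts this into $u\,e^{u}=-\frac{2e^{-2+\psi(n)}}{n}$, which is exactly the defining equation of the Lambert $W$ function, yielding $\alpha(n)=-\frac{n}{2}W\!\left(-\frac{2e^{-2+\psi(n)}}{n}\right)$. Two verifications are needed. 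First, one checks the critical point is a maximum by examining $\partial^{2}f/\partial r^{2}=\frac{4}{n^{2}}-\frac{2}{nr}$, which is negative precisely when $r<n/2$, consistent with the expected location $r\approx\vartheta n$. Second, since the argument of $W$ is negative, one must select the branch $W_{0}$, i.e. the one returning $u\in(-1,0)$ and compatible with $\vartheta=-\tfrac{1}{2}W(-2/e^{2})$ of Lemma \ref{L:max}. Part i) then follows from $\alpha(n)\approx\mathfrak{m}(n)$.

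For part ii), I would expand this closed form as $n\to\infty$. Inserting the standard asymptotics $\psi(n)=\log n-\frac{1}{2n}+\mathcal{O}(n^{-2})$ shows that the argument of $W$ is $z_{n}=-\frac{2}{e^{2}}+\frac{1}{e^{2}n}+\mathcal{O}(n^{-2})$, a small perturbation of $z_{0}:=-2/e^{2}$, where $W(z_{0})=-2\vartheta$. Linearizing $W$ at $z_{0}$ by means of $W'(z)=\frac{W}{z(1+W)}$ gives $W'(z_{0})=\frac{\vartheta e^{2}}{1-2\vartheta}$, hence
\[
\alpha(n)=-\frac{n}{2}W(z_{n})=n\vartheta-\frac{\vartheta}{2(1-2\vartheta)}+\mathcal{O}(n^{-1}).
\]
Finally, the defining relation $\vartheta e^{-2\vartheta}=e^{-2}$ rearranges to $e^{2-2\vartheta}=1/\vartheta$, and substituting this identity shows $-\frac{\vartheta}{2(1-2\vartheta)}=\frac{1}{4-2e^{2-2\vartheta}}$, which matches the stated constant. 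Since Lemma \ref{lem:f} only guarantees $|\alpha(n)-\mathfrak{m}(n)|\to 0$, this constant-order term is transferred to $\mathfrak{m}(n)$ up to $o(1)$, which is exactly the claimed asymptotic.

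The main obstacle I expect is controlling two layers of approximation at once: the passage from $P$ to $f$ (hence from $\mathfrak{m}$ to $\alpha$) is only accurate to $o(1)$, while the linearization of $W$ contributes its own $\mathcal{O}(n^{-1})$ remainder, so one must argue carefully that the constant $\frac{1}{4-2e^{2-2\vartheta}}$ is genuinely correct and uncontaminated by lower-order contributions. The Lambert branch selection is a smaller but real subtlety, since the negative argument admits two real branches and only $W_{0}$ corresponds to the maximizer rather than to the spurious critical point near $r=n$.
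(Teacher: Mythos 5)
Your proposal is correct and takes essentially the same route as the paper: both identify the maximizer $\alpha(n)$ of $f(\cdot,n)$ from Lemma \ref{lem:f} by solving $\frac{\partial f}{\partial r}=0$ in terms of the Lambert $W$ function, and then transfer the conclusion to $\mathfrak{m}(n)$ via $\left\vert \alpha(n)-\mathfrak{m}(n)\right\vert \to 0$. The only difference is one of completeness: the paper simply asserts the limit $\lim_{n\to\infty}\left(\mathfrak{m}(n)-n\vartheta\right)=\frac{1}{4-2e^{2-2\vartheta}}$ (a computation it admits delegating to Mathematica in the preliminaries), whereas you carry it out explicitly via the expansion of $\psi(n)$, the linearization of $W$ at $-2/e^{2}$, and the identity $e^{2-2\vartheta}=1/\vartheta$, and you additionally supply the second-order and branch-selection checks that the paper leaves implicit.
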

\begin{proof}
\begin{itemize}
\item[i)] Since $f(\cdot,n)$ (see Lemma \ref{lem:f}) reaches its maximum at
$\alpha(n)$, it follows that
\[
0=\frac{\partial f}{\partial r}(\alpha(n),n)=-\frac{4}{n}+\frac{4\alpha
(n)}{n^{2}}+\frac{2}{n}\psi(n)-\frac{2\log(\alpha(n))}{n}.
\]

From this, and taking into account the definition of Lambert $W$ function it
follows that
\[
\alpha(n)=-\frac{n}{2}W(-\frac{2e^{-2+\psi(n)}}{n})
\]
so it is enough to apply Lemma \ref{lem:f}.

\item[ii)] Using i), we have that
\[
\lim_{n\rightarrow\infty}\left(  \mathfrak{m}(n)-n\vartheta\right)
=\lim_{n\rightarrow\infty}\left(  -\frac{n}{2}W(-\frac{2e^{-2+\psi(n)}}%
{n})-n\vartheta\right)  =\frac{1}{4-2e^{2-2\vartheta}}.
\]
Thus,
\[
\mathfrak{m}(n)\approx n\vartheta+\frac{1}{4-2e^{2-2\vartheta}}%
\]
as claimed.
\end{itemize}
\end{proof}

Using the formula $\left[n\vartheta+\frac{1}{4-2e^{2-2\vartheta}}\right]$
as far as our computational capacity let us, we find that the optimal cutoff value produces errors only for
very few values of $n$. Specifically, only four are known: 2, 3, 23 and 2971.
We now show another asymptotic result for the solution $\mathfrak{m}(n)\approx
h(n)$ that improves the result of the previous theorem in the sense that there
is no known value of $n>4$ for which $[h(n)]$ returns a wrong result.

\begin{prop}%
\[
\mathfrak{m}(n)\approx h(n):=\frac{1}{2} + \frac{3\,W(\frac{-2}{e^{2}})}{4} +
\frac{W(\frac{-2}{e^{2}})}{4\,\left(  1 + W(\frac{-2}{e^{2}}) \right)  } -
\frac{1}{2\,W(\frac{1}{e^{\frac{3}{2\,n}}\,n\,W(\frac{-2}{e^{2}})})}.
\]
\end{prop}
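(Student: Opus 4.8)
The plan is to treat $P(\cdot,n)$ as a smooth function on $[1,n]$ and to locate its interior maximizer $\mathfrak{m}(n)$ from the exact stationarity condition, pushing the asymptotic analysis of Lemma \ref{lem:f} one order further. Starting from the closed form $P(r,n)=-\frac{2r}{n}+\frac{2r^{2}}{n^{2}}+\frac{2r}{n}\psi(n)-\frac{2r}{n}\psi(r)$, differentiating in $r$ gives the critical-point equation
\[
-1+\frac{2r}{n}+\psi(n)-\psi(r)-r\psi'(r)=0 .
\]
First I would insert the Stirling-type expansions $\psi(r)=\log r-\frac{1}{2r}-\frac{1}{12r^{2}}+\cdots$, $\psi'(r)=\frac1r+\frac{1}{2r^{2}}+\frac{1}{6r^{3}}+\cdots$ and $\psi(n)=\log n-\frac{1}{2n}-\frac{1}{12n^{2}}+\cdots$. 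The structural fact that drives everything is that the $\frac{1}{2r}$ contributions cancel in the combination $-\psi(r)-r\psi'(r)=-\log r-1-\cdots$, so the leading correction to the limiting equation $2x-\log x=2$ (whose relevant root is $x=\vartheta$, by Lemma \ref{L:max}) comes only from the next order. This reduces the condition to a transcendental equation of the shape $\frac{2r}{n}+\log\frac{n}{r}=2+\varepsilon(r,n)$ with an explicit correction $\varepsilon$.

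Next I would solve this equation in closed form. Writing $x=r/n$ and using $W(-2/e^{2})=-2\vartheta$, the equation $2x-\log x=2+\varepsilon$ is inverted exactly by the Lambert $W$ function; since $\varepsilon$ depends on $r$, I would make one self-consistent substitution $r\mapsto\vartheta n$ inside the correction and then re-expand the resulting $W$-expression by means of the small-argument reciprocal relation $\frac{1}{W(z)}=\frac{1}{z}+1-\frac{z}{2}+\cdots$. Collecting and re-summing should reproduce exactly the stated $h(n)$, in which the factor $e^{3/(2n)}$ and the constant $\frac{1}{2}+\frac{3W}{4}+\frac{W}{4(1+W)}$ (with $W=W(-2/e^{2})$) encode the retained second-order data; matching these constants is delicate and is the step I would confirm symbolically, as the authors do elsewhere. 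A reassuring check is that the expansion of $h(n)$ must recover $n\vartheta+\frac{1}{4-2e^{2-2\vartheta}}$ from the previous theorem, and indeed $\frac{W}{4(1+W)}=\frac{\vartheta}{2(2\vartheta-1)}=\frac{1}{4-2e^{2-2\vartheta}}$.

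The main obstacle is not the algebra but the error control: one must show that replacing $\psi$ by a finite segment of its expansion displaces the maximizer only by $o(1)$. This is genuinely subtle because the maximum of $P(\cdot,n)$ is very flat. Indeed $\partial_{r}^{2}P(r,n)=\frac{4}{n^{2}}-\frac{2}{nr}+\cdots$, which at $r\approx\vartheta n$ is of order $1/n^{2}$, so a crude sandwiching by vertical bounds of size $\mathcal{O}(1/n)$ would a priori permit a horizontal displacement as large as $\mathcal{O}(\sqrt{n})$ and is not by itself conclusive. The correct route is an implicit-function (Newton) estimate: the neglected remainder $-\frac{2r\epsilon(r)}{n}$, with $\epsilon(r)=\mathcal{O}(1/r^{2})$, has derivative $\mathcal{O}(1/n^{3})$ near the peak, and dividing this by the curvature, which is of order $1/n^{2}$, shows that the induced shift of the maximizer is $\mathcal{O}(1/n)\to0$. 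Combined with the closed-form solution, this yields $\lvert\mathfrak{m}(n)-h(n)\rvert\to0$, i.e. $\mathfrak{m}(n)\approx h(n)$; the sharper empirical assertion that $[h(n)]$ is exact for all known $n>4$ would then be backed by direct computation.
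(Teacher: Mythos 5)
Your proposal is correct in substance, but it takes a genuinely different and much heavier route than the paper. The paper's proof is essentially one line: since the preceding theorem already established $\mathfrak{m}(n)\approx n\vartheta+\frac{1}{4-2e^{2-2\vartheta}}$, it suffices to verify that $h(n)$ has the straight line $n\vartheta+\frac{1}{4-2e^{2-2\vartheta}}$ as an asymptote, and this is precisely the computation you relegate to a ``reassuring check.'' Indeed, with $W=W(-2/e^{2})=-2\vartheta$, your reciprocal expansion $\frac{1}{W(z)}=\frac{1}{z}+1-\frac{z}{2}+\cdots$ applied to the last term of $h$ gives $\vartheta n e^{3/(2n)}-\frac{1}{2}+o(1)=\vartheta n+\frac{3\vartheta}{2}-\frac{1}{2}+o(1)$; the constant $\frac{1}{2}+\frac{3W}{4}=\frac{1}{2}-\frac{3\vartheta}{2}$ cancels this, and what survives is $\vartheta n+\frac{W}{4(1+W)}$, which by your (correct) identity equals $\vartheta n+\frac{1}{4-2e^{2-2\vartheta}}$. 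So the Proposition follows from the earlier theorem without ever returning to the stationarity equation. What your longer route buys is rigor that the paper actually lacks: your observation that the peak of $P(\cdot,n)$ has curvature of order $1/n^{2}$, so that a vertical sandwich of width $\mathcal{O}(1/n)$ localizes the maximizer only to within $\mathcal{O}(\sqrt{n})$, pinpoints a genuine gap in the paper's own proof of Lemma \ref{lem:f}, where $\left\vert \beta_{2}(n)-\beta_{1}(n)\right\vert \to 0$ is asserted from the vertical gap alone; your Newton-type estimate (derivative of the neglected remainder, $\mathcal{O}(1/n^{3})$, divided by the curvature, $\mathcal{O}(1/n^{2})$) is the correct repair, and it puts the theorem your check relies on---hence the Proposition itself---on solid ground.

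One caveat: your expectation that the self-consistent substitution and re-summation will ``reproduce exactly the stated $h(n)$'' overreaches. The asymptotic analysis forces only the asymptote; the specific packaging of $h$ (the factor $e^{3/(2n)}$ inside a reciprocal Lambert $W$, and the particular split of the constant) is a choice, not a consequence, and no argument of this kind can certify the empirical claim that $[h(n)]=\mathfrak{m}(n)$ for all known $n>4$. Since ``$\approx$'' here means only that the difference tends to zero, the asymptote comparison you already carry out is what closes the proof; the rest of your derivation re-proves the preceding theorem rather than this Proposition.
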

\begin{proof}
It suffices to verify that $h(n)$ has the following straight line as an
asymptote:
\[
n \vartheta+\frac{1}{4-2e^{2-2\vartheta}}.
\]

\end{proof}

Although no exceptions to the equality $\mathfrak{m}(n)=[h(n)]$ have been
found for $n>4$, there is no absolute guarantee that this occurs \textit{ad
infinitum}. This is what happens with the following asymptotic formula for the
optimal cutoff value in the classical secretary problem
\[
\left\lceil -\frac{1}{2W(-\frac{e^{-(1+1/(2n)}}{2n})}\right\rceil
\]
which coincides with the solution for all $n>3$ with no known exceptions.

\begin{rem}
The constant $\vartheta=-\frac{1}{2}W(-\frac{2}{e^{2}})=0.20318786\dots$
(A106533 in OEIS) appears in \cite{FER2} in the context of the secretary
problem considering a payoff of $(n-k+1)/n$. In fact, it appears erroneously
approximated as 0.20388, but is actually the constant which appears, as in
this study, as the solution of the equation $-\log(x)-2+2x=0$. Furthermore, as
a noteworthy curiosity, it should be pointed out that this constant has
appeared in a completely different context from the one addressed here as the
solution to the above equation (the Daley-Kendall model) and is known as the
\emph{rumour's constant} \cite{RUMOR,ru}.
\end{rem}

\subsection{What is the value of the information regarding the number of
objects?}

When the number of available objects is unknown, but it is known that it is a
uniform random variable $U[0,N]$, we have established that the asymptotic value
of the probability of success is $2(\vartheta-\vartheta^{2})=0.3238\dots$. It
is natural to ask how valuable exact knowledge of the number of objects that
we shall be able to observe may be. That is, if we win 1 Euro for making a
successful choice, how much might we pay to know the value of the random
variable $U[0,N]$ that represents the number of available objects.

If, prior to the inspections, we are given the information regarding the
number of objects, $n\in[1,N]$, in each case we will adopt the optimal
strategy set out in the first section of this paper that gives us a
probability of success
\[
\mathfrak{p}_{n}:=
\begin{cases}
\frac{n}{2(n-1)}, & \text{if $n>2$ is even};\\
\frac{n+1}{2n}, & \text{if $n>2$ is odd}\\
1 & \text{if $n\in\{1,2\}$ }%
\end{cases}
\]
and the overall probability of success will be $\mathfrak{P} _{N}:=N^{-1}%
\sum_{n=1}^{N}\mathfrak{p}_{n}$. Thus, it is straightforward to see that the
probability of success with a number of objects resulting from a uniform
random variable $[1,N]$, whose value is revealed before the inspections start,
when $N$ tends to infinity, is
\[
\lim_{N\rightarrow\infty}\mathfrak{P}_{N}=\frac{1}{2}%
\]
Therefore, the value of the information is $\mathfrak{P}_{N}-\mathbf{P}(N)$,
which, when $N$ tends to infinity, is
\[
\lim_{N\rightarrow\infty}(\mathfrak{P}_{N}-\mathbf{P}(N))=\frac{1}%
{2}-2(\vartheta-\vartheta^{2})=0.1761\dots
\]

\section{Unknown number of candidates with a $\lambda-$Poisson distribution}

For the classical secretary problem, Presman and Sonin, \cite{sonin} showed
that, if the number of candidates is Poisson distributed with parameter
$\lambda$, then the optimal stopping limit relation is $r^{\ast}%
(\lambda)/\lambda\rightarrow e^{-1}$ and that this is also the asymptotic
value of the probability of success. In the variant we have introduced in this
paper, an analogous conclusion is reached by replacing $1/e$ by $1/2$, as seen
in Section 2.

If the number of objects in the best or worst variant is Poisson
distributed with parameter $\lambda$, the probability of success after
rejecting the first $r$ candidates will be
\[
P(0,\lambda):=\frac{\lambda}{e^{\lambda}} + \frac{\lambda^{2}}{2\,e^{\lambda}%
}+\sum_{i=3}^{\infty}\frac{2}{i}\frac{e^{-\lambda}\lambda^{i}}{i!},
\]
\[
P(1 ,\lambda):= \frac{\lambda^{2}}{2\,e^{\lambda}}+\sum_{i=3}^{\infty}\frac{2
}{i}\frac{e^{-\lambda}\lambda^{i}}{i!},
\]
\[
P(r,\lambda):=\sum_{i=r+1}^{\infty}\frac{2(i-r)r}{i(i-1)}\frac{e^{-\lambda
}\lambda^{i}}{i!}
\]

Following the same notation as in Section 3, we denote by $\mathfrak{m}%
(\lambda)$ the value for which $P(\cdot,\lambda)$ reaches its maximum value
and by $\mathbf{P}(\lambda)$, the maximum value it reaches; i.e.
$\mathbf{P}(\lambda):=P(\mathfrak{m}(\lambda),\lambda)$. Unlike what occurred
in the previous case, the probability of success in the game, $\mathbf{P}%
(\lambda)$, is not monotone with respect to $\lambda$. Next, we show the graph
of $\mathbf{P}(\lambda)$ (Figure 1), which is seen to comprise concave arcs in
each interval $[\lambda_{i},\lambda_{i+1}]$, where $\lambda_{0}=0$,
$\lambda_{1}$ is such that $P(0,\lambda_{1})=P(2,\lambda_{1})$ and
$\lambda_{i}$ for $i>1$ is such that $P(i,\lambda_{i})=P(i+1,\lambda_{i})$.%

\begin{figure}
{\includegraphics[
height=2.7354in,
width=4.3673in
]{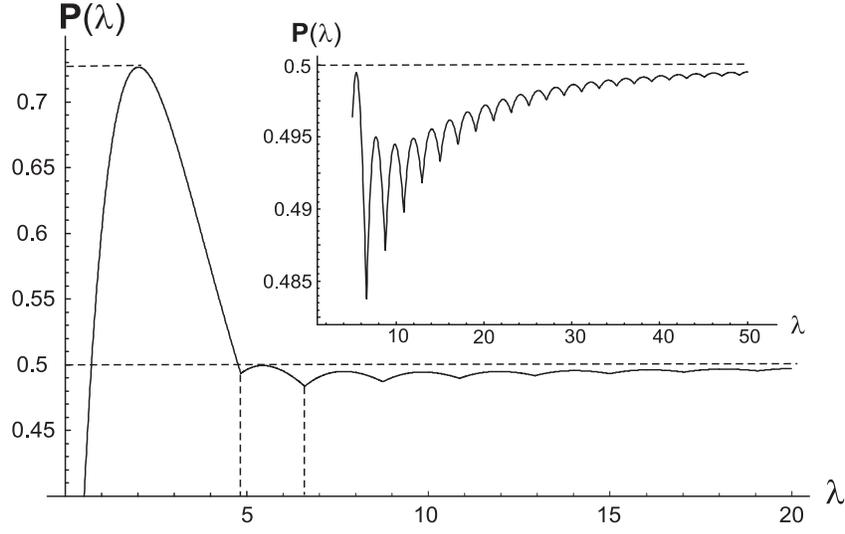}}\\
\caption{The probability $P(\lambda)$.}
\end{figure}

The maximum probability of success is achieved in $\lambda=\widetilde{\lambda}
:=2.01771\dots$, the solution to the equation
\[
0=-2 + 2\,e^{x} - x + 2\, \gamma\,x + x^{2} + 2\,x\,\Gamma(-x) +
2\,x\,\log(-x),
\]
where $\mathfrak{m}( \widetilde{\lambda} ) =0$ and $\mathbf{P}(\widetilde
{\lambda} )= 0.726470\dots$

We will proof that, as Figure 1 suggests, $\mathbf{P}(\lambda)$ tends to $1/2$
when $\lambda$ grows to infinity and that this propability can be
asymptotically reached using $\frac{\lambda}{2}$ as cutoff value. But first we
need some technical results.

\begin{lem}
\label{LEM:f} Let us define the function
\[
f(r,\lambda):=\sum_{i=2}^{r}\frac{2(i-r)r}{i(i-1)}\frac{e^{-\lambda}%
\lambda^{i}}{i!}.
\]
Then, $\lim_{\lambda\rightarrow\infty}f(\frac{\lambda}{2},\lambda)=0$.
\end{lem}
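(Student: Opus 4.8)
The plan is to exploit two competing effects: on the range of summation the rational coefficient is only polynomially large in $\lambda$, whereas the attached Poisson weights add up to an exponentially small quantity. Write $r=\lambda/2$. Since the sum runs over $2\le i\le r$, every factor $(i-r)$ is $\le 0$, so $f(\tfrac{\lambda}{2},\lambda)\le 0$ and it suffices to bound $|f(\tfrac{\lambda}{2},\lambda)|$ from above. For $2\le i\le r$ one has $|i-r|\le r$ and $\frac{r}{i(i-1)}\le\frac{r}{2}$, so that
\[
\left|\frac{2(i-r)r}{i(i-1)}\right|\le r^{2}=\frac{\lambda^{2}}{4}.
\]
Factoring this uniform bound out of the sum gives
\[
\left|f(\tfrac{\lambda}{2},\lambda)\right|\le\frac{\lambda^{2}}{4}\sum_{i=2}^{\lfloor\lambda/2\rfloor}\frac{e^{-\lambda}\lambda^{i}}{i!}\le\frac{\lambda^{2}}{4}\,\Pr\!\left[X_{\lambda}\le\tfrac{\lambda}{2}\right],
\]
where $X_{\lambda}$ denotes a Poisson random variable with parameter $\lambda$.

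The problem is thereby reduced to estimating the lower tail of $X_{\lambda}$. First I would invoke the standard Chernoff bound $\Pr[X_{\lambda}\le a]\le e^{ta+\lambda(e^{-t}-1)}$, valid for every $t>0$; optimizing at $t=\log(\lambda/a)$ and specializing to $a=\lambda/2$ yields
\[
\Pr\!\left[X_{\lambda}\le\tfrac{\lambda}{2}\right]\le 2^{\lambda/2}e^{-\lambda/2}=\left(\frac{2}{e}\right)^{\lambda/2}.
\]
Since $2/e<1$, this decays exponentially in $\lambda$. Combining the two displays,
\[
0\le\left|f(\tfrac{\lambda}{2},\lambda)\right|\le\frac{\lambda^{2}}{4}\left(\frac{2}{e}\right)^{\lambda/2}\xrightarrow[\lambda\to\infty]{}0,
\]
because an exponentially decaying factor dominates any fixed power of $\lambda$, and the claim follows by squeezing.

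I expect the only delicate point to be the tail estimate itself; the coefficient bound and the final squeeze are routine. If one prefers to avoid the probabilistic language of Chernoff, the same conclusion is reachable by elementary means: the summand $e^{-\lambda}\lambda^{i}/i!$ is increasing in $i$ for $i\le\lambda$, so the Poisson sum is at most $\lfloor\lambda/2\rfloor$ times its largest term, at $i=\lfloor\lambda/2\rfloor$, and a Stirling estimate of that term reproduces the factor $(2/e)^{\lambda/2}$ up to a $1/\sqrt{\lambda}$ correction, again beaten by the exponential. A minor bookkeeping remark is that $\lambda/2$ need not be an integer; this is harmless, since $i\le\lfloor\lambda/2\rfloor<\lambda$ keeps every term of $f$ nonpositive and preserves the bound $|i-r|\le r$ used above.
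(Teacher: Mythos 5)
Your proof is correct, and it takes a genuinely different route from the paper's. Both arguments start the same way, by bounding the rational coefficient uniformly (you get $\lambda^{2}/4$; the paper uses the cruder $\lambda^{2}$), so that everything reduces to showing that $\lambda^{2}$ times the Poisson lower tail $\sum_{i\le \lambda/2} e^{-\lambda}\lambda^{i}/i!$ vanishes. From there the paths diverge: you dispatch the tail with the standard Chernoff bound
\[
\Pr\!\left[X_{\lambda}\le \tfrac{\lambda}{2}\right]\le e^{ta+\lambda(e^{-t}-1)}\Big|_{t=\log 2}=\left(\tfrac{2}{e}\right)^{\lambda/2},
\]
which is correct (the optimization at $t=\log(\lambda/a)$ is valid since $a=\lambda/2<\lambda$), and the squeeze $\lambda^{2}(2/e)^{\lambda/2}\to 0$ finishes it. The paper instead sets $\lambda=2n$, defines $a_{n}:=4n^{2}e^{-2n}\sum_{i=2}^{n}(2n)^{i}/i!$, derives a recursion showing $a_{n+1}-\left(\tfrac{n+1}{n}\right)^{2}a_{n}/e^{2}\to 0$, and concludes $a_{n}\to 0$ from the fixed-point equation $l=l/e^{2}$, invoking (without proof) that $a_{n}$ is decreasing and nonnegative. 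Your approach buys an explicit exponential decay rate and a fully self-contained argument modulo a textbook inequality, and it also handles non-integer $\lambda/2$ cleanly; the paper's argument avoids probabilistic machinery but is longer, only yields convergence without a rate, and rests on an unjustified monotonicity claim. Your fallback elementary variant (the summand $e^{-\lambda}\lambda^{i}/i!$ increases for $i\le\lambda-1$, so bound the sum by $\lfloor\lambda/2\rfloor$ times its last term and apply Stirling) is also sound and would make the proof Chernoff-free at the cost of a short Stirling computation.
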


\begin{proof}
First of all, note that
\[
\left\vert \sum_{i=2}^{\lambda/2}\frac{2(i-\lambda/2)\lambda/2}{i(i-1)}%
\frac{e^{-\lambda}\lambda^{i}}{i!}\right\vert <\lambda^{2}e^{-\lambda}%
\sum_{i=2}^{\lambda/2}\frac{\lambda^{i}}{i!} <\frac{1}{\lambda}
\]
so, if we define
\[
a_{n}:=2^{2}n^{2}e^{-2n}\sum_{i=2}^{n}\frac{2^{i}n^{i}}{i!},
\]
we just need to prove that $\lim_{n\rightarrow\infty}a_{n}=0$.

Now,
\[
a_{n+1}=2^{2}(n+1)^{2}e^{-2(n+1)}\sum_{i=2}^{n+1}\frac{2^{i}(n+1)^{i}}{i!}
\]
and since
\begin{align*}
\sum_{i=2}^{n+1}\frac{2^{i}(n+1)^{i}}{i!}  &  =\sum_{i=2}^{n}\frac{2^{i}n^{i}%
}{i!}+\frac{2^{2}}{2!}(2n+1)+\frac{2^{3}}{3!}(3n^{2}+3n+1)+\ldots+\\
&  +\frac{2^{n}}{n!}\left(  n^{n}+\frac{n}{1!}n^{n-1}+\frac{n(n-1)}{2!}%
n^{n-2}+\ldots+\frac{n(n-1)\cdots2.1}{n!}\right)  +\frac{2^{n+1}}%
{(n+1)!}(n+1)^{n+1}%
\end{align*}
we have that
\begin{align*}
a_{n+1}  &  =2^{2}(n+1)^{2}e^{-2(n+1)}\sum_{i=2}^{n}\frac{2^{i}n^{i}}%
{i!}+2^{2}(n+1)^{2}e^{-2(n+1)}\frac{2^{2}}{2!}(2n+1)+\ldots+\\
&  +2^{2}(n+1)^{2}e^{-2(n+1)}\frac{2^{n}}{n!}\left(  n^{n}+\frac{n}{1!}%
n^{n-1}+\frac{n(n-1)}{2!}n^{n-2}+\ldots+\frac{n(n-1)\cdots2.1}{n!}\right) \\
&  +2^{2}(n+1)^{2}e^{-2(n+1)}\frac{2^{n+1}}{(n+1)!}(n+1)^{n+1}%
\end{align*}

On the other hand,
\[
\sum_{i=2}^{n}\frac{2^{i}n^{i}}{i!}=\frac{a_{n}}{2^{2}n^{2}e^{-2n}}.
\]
Hence
\begin{align*}
a_{n+1}  &  =\left(  \frac{n+1}{n}\right)  ^{2}\frac{a_{n}}{e^{2}}%
+2^{2}(n+1)^{2}e^{-2(n+1)}\frac{2^{2}}{2!}(2n+1)+\ldots+\\
&  +2^{2}(n+1)^{2}e^{-2(n+1)}\frac{2^{n}}{n!}\left(  n^{n}+\frac{n}{1!}%
n^{n-1}+\frac{n(n-1)}{2!}n^{n-2}+\ldots+\frac{n(n-1)\cdots2.1}{n!}\right) \\
&  +2^{2}(n+1)^{2}e^{-2(n+1)}\frac{2^{n+1}}{(n+1)!}(n+1)^{n+1},
\end{align*}
which clearly implies that
\[
\lim_{n\rightarrow\infty} \left[  a_{n+1}-\left(  \frac{n+1}{n}\right)
^{2}\frac{a_{n}}{e^{2}}\right]  =0.
\]

Finally, $a_{n}$ being decreasing and non-negative, we have that
$\lim_{n\rightarrow\infty}a_{n+1}=\lim_{n\rightarrow\infty}a_{n}=l$. Thus,
\[
l=\frac{l}{e^{2}}\Rightarrow l\left(  1-\frac{1}{e^{2}}\right)  =0\Rightarrow
l=0
\]
and
\[
l= \lim_{n\rightarrow\infty}a_{n}=0
\]
as claimed.
\end{proof}

\begin{teor}
\label{TEOR:PE} Consider the function $P^{\star}$ defined over $\mathbb{N}%
\times\mathbb{R}^{+}$ and given by
\[
P^{\star}(r,\lambda):=\sum_{i=2}^{\infty}\frac{2(i-r)r}{i(i-1)}\frac
{e^{-\lambda}\lambda^{i}}{i!}.
\]
For a fixed $\lambda$, let us denote by $\mathcal{M}^{\star}(\lambda)$ the
value for which $P^{\star} (\cdot,\lambda)$ reaches its maximum and put
$\mathcal{P}^{\star}(\lambda):=P^{\star}(\mathcal{M}^{\star}(\lambda
),\lambda)$. Then, the following hold:

\begin{itemize}
\item[i)] $\displaystyle\lim_{\lambda\rightarrow\infty}\frac{\mathcal{M}%
^{\star}(\lambda)}{\lambda}=\frac{1}{2}$.

\item[ii)] $\displaystyle \lim_{\lambda\rightarrow\infty}P^{\star}%
(\frac{\lambda}{2},\lambda)=\frac{1}{2}$.

\item[iii)] $\mathcal{M}^{\star}(\lambda)\approx\frac{\lambda}{2}-1$.
\end{itemize}
\end{teor}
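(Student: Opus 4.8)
The plan is to exploit the fact that, once the summation index starts at $i=2$ rather than at $i=r+1$, the summand splits by partial fractions into a form that is \emph{quadratic} in $r$; this is precisely what makes $P^{\star}$ more tractable than the true probability $P$. First I would write
\[
\frac{2(i-r)r}{i(i-1)} = \frac{2r^{2}}{i} + \frac{2r(1-r)}{i-1},
\]
so that, setting $p_{i}:=e^{-\lambda}\lambda^{i}/i!$, $S_{1}:=\sum_{i=2}^{\infty}p_{i}/i$ and $S_{2}:=\sum_{i=2}^{\infty}p_{i}/(i-1)$, one obtains the closed form
\[
P^{\star}(r,\lambda) = 2(S_{1}-S_{2})\,r^{2} + 2S_{2}\,r.
\]
Since $1/(i-1)>1/i$ forces $S_{2}>S_{1}$, this is a downward parabola in $r$, whose vertex sits at $r^{\star}(\lambda):=\dfrac{S_{2}}{2(S_{2}-S_{1})}$; consequently $\mathcal{M}^{\star}(\lambda)$, the maximizer over $\mathbb{N}$, is the nearest integer to $r^{\star}(\lambda)$ and differs from it by at most $1/2$. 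All three assertions then reduce to the asymptotics of the two Poisson moment sums $S_{1}$, $S_{2}$ and of their difference.

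The second step is to pin those asymptotics down via two exact evaluations. The integral representation $1/i=\int_{0}^{1}t^{i-1}\,dt$ gives $\sum_{i\ge1}p_{i}/i = e^{-\lambda}\bigl(\mathrm{Ei}(\lambda)-\gamma-\log\lambda\bigr)$, whence $S_{1}=\sum_{i\ge1}p_{i}/i-p_{1}$; and telescoping $1/(i(i+1))=1/i-1/(i+1)$ yields
\[
S_{2} = \lambda\sum_{i\ge1}\frac{p_{i}}{i(i+1)} = \lambda\,e^{-\lambda}\bigl(\mathrm{Ei}(\lambda)-\gamma-\log\lambda\bigr)-1+e^{-\lambda}(1+\lambda).
\]
Feeding in the asymptotic expansion $\mathrm{Ei}(\lambda)\sim \lambda^{-1}e^{\lambda}\sum_{k\ge0}k!\,\lambda^{-k}$ and discarding the exponentially small remainders ($e^{-\lambda}(\gamma+\log\lambda)$, $\lambda e^{-\lambda}$, $e^{-\lambda}(1+\lambda)$), I obtain
\[
S_{1}\sim\tfrac1\lambda+\tfrac1{\lambda^{2}}+\tfrac2{\lambda^{3}}, \qquad S_{2}\sim\tfrac1\lambda+\tfrac2{\lambda^{2}}+\tfrac6{\lambda^{3}}, \qquad S_{2}-S_{1}\sim\tfrac1{\lambda^{2}}+\tfrac4{\lambda^{3}}.
\]

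With these in hand the three parts fall out by elementary algebra. For (i), $r^{\star}/\lambda = S_{2}/\bigl(2\lambda(S_{2}-S_{1})\bigr)\to \tfrac12$ from the leading terms alone, and since $|\mathcal{M}^{\star}(\lambda)-r^{\star}(\lambda)|\le \tfrac12$ the same limit holds for $\mathcal{M}^{\star}(\lambda)/\lambda$. For (iii), retaining the subleading terms and clearing by $\lambda^{3}$ gives
\[
r^{\star}(\lambda)=\frac{\lambda^{2}+2\lambda+6+o(1)}{2\lambda+8+o(1)} = \frac{\lambda}{2}-1+\frac{7}{\lambda}+o\!\left(\tfrac1\lambda\right),
\]
so the nearest integer is $\tfrac{\lambda}{2}-1$ up to vanishing corrections. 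For (ii), substituting $r=\lambda/2$ into the parabola gives $P^{\star}(\lambda/2,\lambda) = -\tfrac{\lambda^{2}}{2}(S_{2}-S_{1})+\lambda S_{2}$, and the $1/\lambda$ contributions cancel to leave $-\tfrac12+1+O(\lambda^{-2})=\tfrac12$.

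The main obstacle is bookkeeping the expansions to the order actually needed: the limits in (ii) and (iii) hinge on \emph{cancellations} (the $2/\lambda$ terms in (ii), and the competition between numerator $2\lambda+6$ and denominator $2\lambda+8$ in (iii)), so I must carry $S_{1}$ and $S_{2}$ correctly through order $\lambda^{-3}$ and be scrupulous that every $e^{-\lambda}$-type remainder is genuinely negligible against these polynomial orders. The leading-order claim (i) is by contrast robust and would follow from concentration of the Poisson mass near $i=\lambda$ alone.
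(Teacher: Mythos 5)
Your proposal is correct, and its backbone coincides with the paper's own proof: both of you exploit the fact that, since the sum starts at $i=2$ rather than at $i=r+1$, $P^{\star}(\cdot,\lambda)$ is an exact downward parabola in $r$, and both express its two coefficients in closed form through the exponential integral. Your split $\frac{2(i-r)r}{i(i-1)}=\frac{2r^{2}}{i}+\frac{2r(1-r)}{i-1}$ is only cosmetically different from the paper's $\frac{2r}{i-1}-\frac{2r^{2}}{i(i-1)}$, and your vertex $r^{\star}(\lambda)=S_{2}/\bigl(2(S_{2}-S_{1})\bigr)$ is literally the paper's $r(\lambda)$ (the paper's function $\operatorname{E}$ is your $\mathrm{Ei}$; the paper reaches the closed forms by integrating $S_{1}''$ and $S_{3}'$ rather than by your integral-representation and telescoping route). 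The genuine divergence is the final step: the paper keeps the coefficients as expressions in $\int_{0}^{\lambda}\frac{e^{x}-1}{x}\,dx$ and obtains all three limits by repeated application of L'H\^opital's rule, whereas you substitute the asymptotic expansion $\mathrm{Ei}(\lambda)\sim\frac{e^{\lambda}}{\lambda}\sum_{k\ge 0}k!\,\lambda^{-k}$ through order $\lambda^{-3}$ and manipulate expansions. Your route buys explicit error control, makes visible exactly which cancellations drive (ii) and (iii), yields a refinement of (iii) (the candidate next term $7/\lambda$), and treats honestly a point the paper elides: $\mathcal{M}^{\star}(\lambda)$ is defined as a maximizer over $\mathbb{N}$ while the vertex is real, and your bound $|\mathcal{M}^{\star}(\lambda)-r^{\star}(\lambda)|\le\frac{1}{2}$ bridges that gap, whereas the paper simply identifies $\mathcal{M}^{\star}(\lambda)$ with $r(\lambda)$. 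The paper's route, in exchange, never invokes the (divergent) Ei series and its remainder estimates; it only needs L'H\^opital on explicit functions, which is mechanical to check symbolically. One small overstatement on your side: with $S_{1},S_{2}$ carried only to order $\lambda^{-3}$ (errors $O(\lambda^{-4})$), the quotient gives $r^{\star}(\lambda)=\frac{\lambda}{2}-1+O(\lambda^{-1})$; the term $\frac{7}{\lambda}$ is of the same size as the error being carried, so certifying it would require one further order of the expansion. This does not affect the theorem, since (iii) only requires $r^{\star}(\lambda)-\frac{\lambda}{2}\to-1$.
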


\begin{proof}
First of all, we are going to compute the value of $\mathcal{M}^{\star
}(\lambda)$. To do so, let us consider functions
\begin{align*}
S_{1}(x)  &  =\sum_{i=2}^{\infty}\frac{x^{i}}{(i-1)i!},\\
S_{2}(x)  &  =\sum_{i=2}^{\infty}\frac{x^{i}}{i(i-1)i!}=-\sum_{i=2}^{\infty
}\frac{x^{i}}{i\text{ }i!}+S_{1}(x),\\
S_{3}(x)  &  =\sum_{i=2}^{\infty}\frac{x^{i}}{i\text{ }i!}%
\end{align*}
Then, we have that
\[
S_{1}^{\prime\prime}(x)=\frac{e^{x}-1}{x},\quad S_{3}^{\prime}(x)=\frac
{e^{x}-1}{x}-1.
\]
If we integrate these expressions we obtain that
\begin{align*}
S_{1}(x)  &  =1-e^{x}+x-\gamma x+x\operatorname{E}(x)-x\log x\\
S_{3}(x)  &  =-\gamma+\operatorname{E}(x)-\log x-x
\end{align*}
where
\[
\operatorname{E}(\lambda)=\gamma+\log\lambda+\int_{0}^{\lambda} \frac{e^{x}%
-1}{x}dx.
\]
Consequently,
\[
S_{2}(x)=1-e^{x}+2x+\left(  \gamma-\operatorname{E}(x)+\log x\right)  \left(
1-x\right)  .
\]
Finally, note that
\[
P^{\star}(r,\lambda)=2re^{-\lambda}\left[  \sum_{i=2}^{\infty}\frac
{\lambda^{i}}{(i-1)i!}-r\sum_{i=2}^{\infty}\frac{\lambda^{i}}{i(i-1)i!}%
\right]  =2re^{-\lambda}S_{1}(\lambda)-2r^{2}e^{-\lambda}S_{2}(\lambda)
\]
Hence,
\[
\frac{\partial P^{\star}(r,\lambda)}{\partial r}=2e^{-\lambda}S_{1}%
(\lambda)-4re^{-\lambda}S_{2}(\lambda)
\]
and, consequently, if we define
\[
r(\lambda):=\frac{1-e^{\lambda}+\lambda-\gamma\lambda+\lambda\operatorname{E}%
(\lambda)-\lambda\log\lambda}{2\,\left[  1-e^{\lambda}+2\lambda+\left(
\gamma-\operatorname{E}(\lambda)+\log\lambda\right)  (1-\lambda)\right]  }%
\]
it is clear that $0=\frac{\partial P^{\star}(r,\lambda)}{\partial
r}|_{r=r(\lambda)}$. Thus, we have just obtained that $\mathcal{M}^{\star
}(\lambda)=r(\lambda)$.

Once we have computed the value of $\mathcal{M}^{\star}(\lambda)$, we are in
the condition to prove the three statements of the theorem

\begin{itemize}
\item[i)] We have
\[
\lim_{\lambda\rightarrow\infty}\frac{\mathcal{M}^{\star}(\lambda)}{\lambda
}=\frac{1}{2}\lim_{\lambda\rightarrow\infty}\frac{1-e^{\lambda}+\lambda
-\gamma\lambda+\lambda\operatorname{E}(\lambda)-\lambda\log\lambda}%
{\lambda\,\left[  1-e^{\lambda}+2\lambda+\left(  \gamma-\operatorname{E}%
(\lambda)+\log\lambda\right)  (1-\lambda)\right]  }
\]
so, if we recall the definition of $\operatorname{E}(\lambda)$ we have that
\[
\lim_{\lambda\rightarrow\infty}\frac{\mathcal{M}^{\star}(\lambda)}{\lambda
}=\frac{1}{2}\lim_{\lambda\rightarrow\infty}\frac{1-e^{\lambda}+\lambda
+\lambda\int_{0}^{\lambda}\frac{e^{x}-1}{x}dx}{\lambda\,\left[  1-e^{\lambda
}+2\lambda+(\lambda-1)\int_{0}^{\lambda}\frac{e^{x}-1}{x}dx\right]  }%
\]
and, applying L'Hôpital's rule repeatedly we obtain:
\[
\lim_{\lambda\rightarrow\infty}\frac{\mathcal{M}^{\star}(\lambda)}{\lambda
}=\frac{1}{2}\lim_{\lambda\rightarrow\infty}\frac{1+\lambda}{\lambda
\,+3}=\frac{1}{2},
\]
as claimed.

\item[ii)] Now, we have that
\[
\lim_{\lambda\rightarrow\infty}P^{\star}(\frac{\lambda}{2},\lambda)
=\lim_{\lambda\rightarrow\infty}\frac{\lambda}{2e^{\lambda}}\left[
(-\lambda^{2}+3\lambda)\int_{0}^{\lambda}\frac{e^{x}-1}{x}dx+(\lambda
-2)e^{\lambda}-2\lambda^{2}+\lambda+1)\right]
\]
so, again by L'Hôpital's rule, we obtain:
\[
\lim_{\lambda\rightarrow\infty}P^{\star}(\frac{\lambda}{2},\lambda)=\frac
{1}{2}\lim_{\lambda\rightarrow\infty}\frac{6+2\lambda}{2\lambda\,+5}=\frac
{1}{2}.
\]

\item[iii)] Since
\[
\lim_{\lambda\rightarrow\infty}(\mathcal{M}^{\star}(\lambda)-\lambda
/2)=\frac{1}{2}\lim_{\lambda\rightarrow\infty}\frac{1-e^{\lambda}+\lambda
e^{\lambda}-2\lambda^{2}+(2\lambda-\lambda^{2})\int_{0}^{\lambda}\frac
{e^{x}-1}{x}dx}{1-e^{\lambda}+2\lambda+(\lambda-1)\int_{0}^{\lambda}%
\frac{e^{x}-1}{x}dx},
\]
L'Hôpital's rule leads to
\[
\lim_{\lambda\rightarrow\infty}(\mathcal{M}^{\star}(\lambda)-\lambda
/2)=\frac{1}{2}\lim_{\lambda\rightarrow\infty}\frac{-2\lambda-4}{\lambda}=-1.
\]
Thus,
\[
\mathcal{M}^{\star}(\lambda)\approx\frac{\lambda}{2}-1,
\]
as claimed.
\end{itemize}
\end{proof}

\begin{lem}
\label{lem:p} Assume that the number of objects $n$ is Poisson distributed with parameter $\lambda$. If the (random) value
of $n$ is revealed before the inspections start, then the probability of
success tends to $1/2$ when $\lambda$ tends to infinity. In other words:
\[
\lim_{\lambda\rightarrow\infty}\sum_{n=1}^{\infty}\mathfrak{p}_{n}%
\frac{e^{-\lambda}\lambda^{n}}{n!}=\frac{1}{2}.
\]

\end{lem}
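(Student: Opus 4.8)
The plan is to recall the explicit formula for $\mathfrak{p}_n$ established in Theorem 1 iii) and to observe that, for every $n>2$, we have
\[
\mathfrak{p}_n = \frac{1}{2} + \varepsilon_n, \qquad \text{where } \varepsilon_n = \mathcal{O}\!\left(\tfrac{1}{n}\right).
\]
Indeed, for even $n$ we have $\mathfrak{p}_n = \tfrac{n}{2(n-1)} = \tfrac{1}{2} + \tfrac{1}{2(n-1)}$, and for odd $n$ we have $\mathfrak{p}_n = \tfrac{n+1}{2n} = \tfrac{1}{2} + \tfrac{1}{2n}$; in both cases $0 \le \varepsilon_n \le \tfrac{1}{2(n-1)}$. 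The only anomalous terms are $n\in\{1,2\}$, where $\mathfrak{p}_n = 1$, contributing an extra $\tfrac{1}{2}$ each. First I would split the target sum accordingly.

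The key step is then to write
\[
\sum_{n=1}^{\infty}\mathfrak{p}_n \frac{e^{-\lambda}\lambda^n}{n!}
= \frac{1}{2}\sum_{n=1}^{\infty}\frac{e^{-\lambda}\lambda^n}{n!}
+ \sum_{n=1}^{\infty}\varepsilon_n \frac{e^{-\lambda}\lambda^n}{n!},
\]
after having absorbed the two exceptional terms into the error series by defining $\varepsilon_1 = \varepsilon_2 = \tfrac{1}{2}$ (consistent with $\mathfrak{p}_1 = \mathfrak{p}_2 = 1$). The first sum equals $\tfrac{1}{2}(1 - e^{-\lambda})$, which tends to $\tfrac{1}{2}$ as $\lambda\to\infty$. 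So the entire matter reduces to showing that the error series vanishes in the limit.

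The heart of the argument, and the step I expect to require the most care, is bounding
\[
R(\lambda) := \sum_{n=1}^{\infty}\varepsilon_n \frac{e^{-\lambda}\lambda^n}{n!}.
\]
Since $0 \le \varepsilon_n \le \tfrac{1}{2(n-1)}$ for $n\ge 2$ (and the $n=1$ term is $\tfrac{1}{2}e^{-\lambda}\lambda \to 0$), I would bound the tail by $\tfrac{1}{2(n-1)} \le \tfrac{1}{n}$ (say) and estimate $\sum_{n\ge2}\tfrac{1}{n}\cdot\tfrac{e^{-\lambda}\lambda^n}{n!}$. The cleanest route is to note that $\varepsilon_n \to 0$ and that a Poisson$(\lambda)$ random variable concentrates on values of order $\lambda$; formally, for any fixed $N$ one splits the sum at $n = N$, bounds the head $\sum_{n<N}\varepsilon_n \tfrac{e^{-\lambda}\lambda^n}{n!}$ by $\tfrac{1}{2}\,\mathbb{P}(\text{Poisson} < N) \to 0$, and bounds the tail $\sum_{n\ge N}\varepsilon_n \tfrac{e^{-\lambda}\lambda^n}{n!}$ by $\sup_{n\ge N}\varepsilon_n \le \tfrac{1}{2(N-1)}$. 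Choosing $N$ large makes the tail small uniformly in $\lambda$, while for each fixed $N$ the head vanishes as $\lambda\to\infty$; a standard $\varepsilon/2$ argument then gives $R(\lambda)\to 0$. The only delicate point is ensuring the two estimates are combined in the correct order of quantifiers (first fix $N$ to control the tail, then let $\lambda\to\infty$ to kill the head), which is routine but must be stated cleanly. Combining, $\lim_{\lambda\to\infty}\sum_{n=1}^{\infty}\mathfrak{p}_n \tfrac{e^{-\lambda}\lambda^n}{n!} = \tfrac{1}{2}$, as claimed.
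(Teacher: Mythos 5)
Your proof is correct, but it follows a genuinely different route from the paper's. The paper evaluates the sum itself: it groups the terms and expresses $\mathfrak{P}_\lambda=\sum_{n\ge1}\mathfrak{p}_n e^{-\lambda}\lambda^n/n!$ through the function $\mathrm{S}(\lambda)=\int_0^\lambda \frac{\sinh x}{x}\,dx$, arriving at the expression $\lambda\,\mathrm{S}(\lambda)e^{-\lambda}$, whose limit $1/2$ then follows from the asymptotics $\mathrm{S}(\lambda)\sim e^\lambda/(2\lambda)$. You never compute the sum at all: you write $\mathfrak{p}_n=\tfrac12+\varepsilon_n$ with $0\le\varepsilon_n\le\tfrac{1}{2(n-1)}$ for $n\ge2$, note that the $\tfrac12$-part contributes $\tfrac12(1-e^{-\lambda})\to\tfrac12$, and kill the error series by the standard two-parameter estimate (first fix $N$ so that $\sup_{n\ge N}\varepsilon_n$ is small, then let $\lambda\to\infty$ so that the Poisson mass below $N$ vanishes); your remark about the order of quantifiers is exactly the right point of care, and the argument is airtight. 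Each approach buys something. The paper's closed form would, in principle, give quantitative information at finite $\lambda$ (rates of convergence), but it rests on delicate series identities; indeed, as printed, the intermediate expression $2\sum_{n\ge1}\frac{n}{2n-1}\frac{e^{-\lambda}\lambda^n}{n!}$ cannot literally equal $\mathfrak{P}_\lambda$, since $\frac{2n}{2n-1}>1\ge\mathfrak{p}_n$, and the identification with $\lambda\,\mathrm{S}(\lambda)e^{-\lambda}$ really only holds up to terms that vanish as $\lambda\to\infty$ (it exploits the fact that the Poisson mass at $2m-1$ and at $2m$ are asymptotically equal near $n\approx\lambda$), so the paper's chain of equalities needs to be read as asymptotic rather than exact. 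Your argument sidesteps all of this: it uses only that $\mathfrak{p}_n\to\tfrac12$, that the $\mathfrak{p}_n$ are bounded, and that Poisson mass escapes to infinity, so it is both more elementary and more general (it would apply verbatim to any bounded success probabilities converging to any limit, and to any family of distributions whose mass escapes to infinity), at the cost of yielding no exact formula for $\mathfrak{P}_\lambda$.
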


\begin{proof}
We reason just like in Subsection 3.1. If, prior to the inspections, we are
given the information regarding the number of objects, $n\in\lbrack0,\infty)$,
in each case we will adopt the optimal strategy set out in the first section
of this paper that gives us a probability of success $\mathfrak{p}_{n}$ and
the overall probability of success will be
\[
\mathfrak{P}_{\lambda}:=\sum_{n=1}^{\infty}\mathfrak{p}_{n}\frac{e^{-\lambda
}\lambda^{n}}{n!}=2\sum_{n=1}^{\infty}\frac{n}{-1+2\,n}\frac{e^{-\lambda
}\lambda^{n}}{n!}=\frac{\lambda\,\mathrm{S}(\lambda)}{e^{\lambda}},
\]
where
\[
\mathrm{S}(\lambda)=\int_{0}^{\lambda}\frac{\sinh(x)}{x}dx.
\]
so we have that
\[
\lim_{\lambda\rightarrow\infty}\mathfrak{P}_{\lambda}=\frac{1}{2}.
\]

\end{proof}

\begin{teor}
In the previous setting we have that
\[
\lim_{\lambda\rightarrow\infty}P(\frac{\lambda}{2},\lambda)=\lim
_{\lambda\rightarrow\infty}P(\mathfrak{m}(\lambda),\lambda)=\lim
_{\lambda\rightarrow\infty}\mathbf{P}(\lambda)=\frac{1}{2}.
\]

\end{teor}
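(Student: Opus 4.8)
The plan is to prove the two genuinely distinct limits---$\lim_{\lambda\to\infty}P(\lambda/2,\lambda)=1/2$ and $\lim_{\lambda\to\infty}\mathbf{P}(\lambda)=1/2$ (the middle quantity $P(\mathfrak{m}(\lambda),\lambda)$ equals $\mathbf{P}(\lambda)$ by definition)---and to obtain the second one by squeezing $\mathbf{P}(\lambda)$ between the first and the informed probability $\mathfrak{P}_{\lambda}$ of Lemma \ref{lem:p}. The connective tissue is the elementary identity obtained by splitting the series $\sum_{i=2}^{\infty}=\sum_{i=2}^{r}+\sum_{i=r+1}^{\infty}$, namely
\[
P(r,\lambda)=P^{\star}(r,\lambda)-f(r,\lambda),
\]
valid for $r\ge 2$, which links the honest success probability $P$ to the auxiliary functions $P^{\star}$ and $f$ already analysed.

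First I would evaluate at $r=\lambda/2$. By Theorem \ref{TEOR:PE}(ii) we have $P^{\star}(\lambda/2,\lambda)\to 1/2$, while Lemma \ref{LEM:f} gives $f(\lambda/2,\lambda)\to 0$; the identity above then yields
\[
\lim_{\lambda\to\infty}P(\lambda/2,\lambda)=\tfrac12-0=\tfrac12,
\]
which is the first asserted limit. Since $\mathbf{P}(\lambda)=\max_{r}P(r,\lambda)$ is by definition the maximum over admissible cutoffs, taking $r=\lambda/2$ (or its nearest integer, a point I would dispatch with a one-line remark that the preceding limits are insensitive to an $O(1)$ shift in $r$) gives the lower bound $\mathbf{P}(\lambda)\ge P(\lambda/2,\lambda)$, whence $\liminf_{\lambda\to\infty}\mathbf{P}(\lambda)\ge 1/2$.

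For the matching upper bound I would invoke a monotonicity-of-information argument: knowing the realised value of $n$ in advance and then playing the exact optimum of Theorem 1 can only beat committing blindly to the single cutoff $\mathfrak{m}(\lambda)$. Concretely, for every $n$ one has $\mathfrak{p}_{n}=\max_{r}P_{n}(r)\ge P_{n}(\mathfrak{m}(\lambda))$, and averaging against the Poisson weights $e^{-\lambda}\lambda^{n}/n!$ gives
\[
\mathbf{P}(\lambda)=\sum_{n}\frac{e^{-\lambda}\lambda^{n}}{n!}P_{n}(\mathfrak{m}(\lambda))\le\sum_{n}\frac{e^{-\lambda}\lambda^{n}}{n!}\mathfrak{p}_{n}=\mathfrak{P}_{\lambda}.
\]
By Lemma \ref{lem:p}, $\mathfrak{P}_{\lambda}\to 1/2$, so $\limsup_{\lambda\to\infty}\mathbf{P}(\lambda)\le 1/2$. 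Combining the two bounds squeezes $\mathbf{P}(\lambda)$ to $1/2$, and since $P(\mathfrak{m}(\lambda),\lambda)=\mathbf{P}(\lambda)$ all three limits coincide.

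The bulk of the difficulty is already discharged by the earlier lemmas, so the main thing to get right here is the upper bound: the claim that the informed probability $\mathfrak{P}_{\lambda}$ dominates $\mathbf{P}(\lambda)$ must be justified rather than assumed, since it is the only place where the full strength of ``information helps'' enters, and it hinges on the pointwise inequality $P_{n}(\mathfrak{m}(\lambda))\le\mathfrak{p}_{n}$ surviving the Poisson average. The only other delicate point is the harmless integer-versus-real reading of the cutoff $\lambda/2$, which does not affect any of the limits.
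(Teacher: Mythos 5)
Your proposal is correct and follows essentially the same route as the paper's own proof: the identity $P(\lambda/2,\lambda)=P^{\star}(\lambda/2,\lambda)-f(\lambda/2,\lambda)$ combined with Lemma \ref{LEM:f} and Theorem \ref{TEOR:PE} for the lower bound, and the Poisson-averaged domination $\mathbf{P}(\lambda)\leq\mathfrak{P}_{\lambda}$ via $P_{n}(\mathfrak{m}(\lambda))\leq\mathfrak{p}_{n}$ together with Lemma \ref{lem:p} for the upper bound, finishing by a squeeze. The one point you flag as delicate (justifying the domination by the informed probability) is handled in the paper exactly as you propose, so there is nothing to repair.
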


\begin{proof}
Since $\mathfrak{p}_{n}$ (see Lemma \ref{lem:p}) is the maximum value of the
probability of suceeding in the problem with complete information with $n$
objects, it follows for all $n>1$ that
\[
\mathfrak{p}_{n} \geq P_{n}(\mathfrak{m}(\lambda))=\frac{2(n-\mathfrak{m}%
(\lambda))\mathfrak{m}(\lambda)}{n(n-1)}%
\]
and hence,
\[
\sum_{n=1}^{\infty}\mathfrak{p}_{n}\frac{e^{-\lambda}\lambda^{n}}{n!} \geq
\sum_{n=\mathfrak{m(\lambda)}+1}^{\infty}\frac{2(n-\mathfrak{m}(\lambda
))\mathfrak{m}(\lambda)}{n(n-1)}\frac{e^{-\lambda}\lambda^{n}}{n!}=
\mathbf{P}(\lambda)
\]
So, if we take upper limits it is clear that $\overline{\lim}_{\lambda
\rightarrow\infty}P(\mathfrak{m}(\lambda),\lambda) \leq\frac{1}{2}$.

Now, recalling the function $f$ from Lemma \ref{LEM:f} we have that
\[
\mathbf{P}(\lambda)=P(\mathfrak{m}(\lambda),\lambda)\geq P(\lambda
/2,\lambda)=P^{\star}(\lambda/2,\lambda)-f(\lambda/2,\lambda).
\]

But, due to Lemma \ref{LEM:f} and Theorem \ref{TEOR:PE} we obtain that
\[
\lim_{\lambda\rightarrow\infty}P^{*}(\lambda/2,\lambda)-f(\lambda
/2,\lambda)=\frac{1}{2}%
\]
and taking lower limits:
\[
\underline{ \lim}_{\lambda\rightarrow\infty}\mathbf{P}(\lambda)\geq\frac{1}%
{2}.
\]

In conclusion,
\[
\frac{1}{2}\leq\underline{ \lim}_{\lambda\rightarrow\infty}\mathbf{P}%
(\lambda)\leq\overline{\lim}_{\lambda\rightarrow\infty}\mathbf{P}(\lambda)
\leq\frac{1}{2}%
\]
and the proof in complete.
\end{proof}

This theorem satisfactorily solves the problem, since it implies that
$\lambda/2$ is the best possible estimation for the cutoff value because it
guaratees an asymptotical succeding probability of $\frac{1}{2}$ (exactly the
same that the exact value of $\mathfrak{m}(\lambda)$ would provide). Thus,
this result proves that if the number of objects is unknown with a $\lambda
-$Poisson distribution, the strategy that we must follow in order to maximize
the succeding probability is to select the first object which is better or
worst than the previous ones just after rejecting the first $\lambda/2$
objects. With this strategy we will succeed approximately one half of the times.

Unlike what happened in the previous section (see 3.1.), when facing with an
unknown number of objects resulting from a Poisson distribution with parameter
$\lambda$, the value of the information of the number of objects is
asymptotically null. For large values of $\lambda$, the difference between the
probability of success with and without this information is negligible.

In addition, as far as our computing capabilities let us check, $\lfloor
\lambda/2-1 \rfloor$ coincides with the exact value $\mathfrak{m}(\lambda)$
for every integral value of $\lambda$ greater than 1. Hence, the following conjecture.

\begin{con}
$\mathfrak{m}(\lambda) \approx\mathcal{M}^{\star}(\lambda)\approx\frac
{\lambda}{2}-1$.
\end{con}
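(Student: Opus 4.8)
The plan is to build on the exact relation between the two payoff functions. Comparing the definition of $P(r,\lambda)$ with that of $P^{\star}(r,\lambda)$ in Theorem~\ref{TEOR:PE} and of $f(r,\lambda)$ in Lemma~\ref{LEM:f}, the sum defining $P^{\star}$ (which starts at $i=2$) splits into the block $2\le i\le r$, which is precisely $f(r,\lambda)$, and the tail $i\ge r+1$, which is precisely $P(r,\lambda)$; hence
\[
P(r,\lambda)=P^{\star}(r,\lambda)-f(r,\lambda).
\]
Since Theorem~\ref{TEOR:PE}~iii) already provides $\mathcal{M}^{\star}(\lambda)\approx\lambda/2-1$, the genuinely new content of the conjecture is the asymptotic equality $\mathfrak{m}(\lambda)\approx\mathcal{M}^{\star}(\lambda)$, which I would make precise as $\mathfrak{m}(\lambda)-\mathcal{M}^{\star}(\lambda)\to0$, exactly in the spirit of Lemma~\ref{lem:f} of Section~3. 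The strategy is then a perturbation argument: from the proof of Theorem~\ref{TEOR:PE}, $P^{\star}(\cdot,\lambda)$ is the downward parabola $2re^{-\lambda}S_{1}(\lambda)-2r^{2}e^{-\lambda}S_{2}(\lambda)$ whose vertex is $\mathcal{M}^{\star}(\lambda)$, so it suffices to show that subtracting the small bump $f$ displaces that vertex by a vanishing amount.

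First I would quantify the two competing quantities near $r=\lambda/2$. The parabola has constant curvature $c(\lambda):=-\partial_{r}^{2}P^{\star}(r,\lambda)=4e^{-\lambda}S_{2}(\lambda)$, and since $S_{2}(\lambda)=\sum_{i\ge2}\lambda^{i}/\!\left(i(i-1)\,i!\right)$ is governed by the terms near $i\approx\lambda$, where $1/\!\left(i(i-1)\right)\approx\lambda^{-2}$, a clean asymptotic expansion of $S_{2}$ gives $c(\lambda)\asymp\lambda^{-2}$, i.e.\ the curvature is only polynomially small. For the perturbation, differentiating $f$ in $r$ kills the boundary term at $i=r$ (there the summand $2(i-r)r/\!\left(i(i-1)\right)$ vanishes), leaving
\[
\frac{\partial f}{\partial r}(r,\lambda)=\sum_{i=2}^{r}\frac{2(i-2r)}{i(i-1)}\frac{e^{-\lambda}\lambda^{i}}{i!}.
\]
For $r$ within a bounded distance of $\lambda/2$ the elementary bounds $|i-2r|\le\lambda$ and $2/\!\left(i(i-1)\right)\le1$ give $\left|\partial_{r}f\right|\le\lambda\sum_{i=2}^{r}e^{-\lambda}\lambda^{i}/i!$, and the remaining sum is the probability that a Poisson$(\lambda)$ variable lands in $[2,r]$ with $r\approx\lambda/2<\lambda$; a Chernoff / large-deviation estimate makes this exponentially small, say $O(e^{-\kappa\lambda})$ with $\kappa>0$ the lower-tail rate. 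The same bound (a sharpening of the one-line estimate already used in Lemma~\ref{LEM:f}) controls $f$ itself, so both $f$ and $\partial_{r}f$ are super-polynomially small, uniformly on $|r-\lambda/2|\le2$.

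The conclusion is then standard. Writing $\phi(r):=\partial_{r}P(r,\lambda)=\partial_{r}P^{\star}(r,\lambda)-\partial_{r}f(r,\lambda)$ and using $\partial_{r}P^{\star}(\mathcal{M}^{\star}(\lambda),\lambda)=0$, we have $\phi(\mathcal{M}^{\star}(\lambda))=-\partial_{r}f(\mathcal{M}^{\star}(\lambda),\lambda)=O(\lambda e^{-\kappa\lambda})$, while $\phi$ has slope $-c(\lambda)\asymp-\lambda^{-2}$ near the vertex; a mean-value (or implicit-function) step therefore places the root $\mathfrak{m}(\lambda)$ of $\phi$ within $O\!\left(\lambda^{3}e^{-\kappa\lambda}\right)\to0$ of $\mathcal{M}^{\star}(\lambda)$, which is the desired asymptotic equality. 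I expect the main obstacle to be exactly this final comparison of orders: both $c(\lambda)$ and the perturbation carry a factor $e^{-\lambda}$, so one must rigorously certify that the curvature decays only polynomially (via a controlled asymptotic for $S_{2}(\lambda)$) while $\partial_{r}f$ decays exponentially (via a genuine large-deviation bound), and one must also reconcile the integer-valued cutoff $\mathfrak{m}(\lambda)$ with the real maximiser $\mathcal{M}^{\star}(\lambda)$. This integrality is, I believe, why the statement is left as a conjecture: the stronger non-asymptotic claim that $\lfloor\lambda/2-1\rfloor=\mathfrak{m}(\lambda)$ for every integer $\lambda>1$ lies beyond such a perturbation estimate, just as in the classical secretary problem recalled earlier.
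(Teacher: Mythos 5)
First, a point of order: the paper does \emph{not} prove this statement. It is advanced purely as a conjecture, on the strength of the computation reported just before it (that $\lfloor\lambda/2-1\rfloor$ agrees with $\mathfrak{m}(\lambda)$ for all checked integer $\lambda>1$), so there is no proof of the authors to compare yours against, and a complete argument would be genuinely new. Your structural reading is correct and consistent with the paper: the identity $P(r,\lambda)=P^{\star}(r,\lambda)-f(r,\lambda)$ is exactly the decomposition the paper uses in the final theorem of Section 4; the half $\mathcal{M}^{\star}(\lambda)\approx\lambda/2-1$ is already proved in Theorem \ref{TEOR:PE} iii); so the open content is indeed $\mathfrak{m}(\lambda)\approx\mathcal{M}^{\star}(\lambda)$. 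Your two quantitative inputs also check out: $P^{\star}(\cdot,\lambda)$ is an exact downward parabola with curvature $4e^{-\lambda}S_{2}(\lambda)\sim 4/\lambda^{2}$ (this follows from the paper's closed form for $S_{2}$ together with $\int_{0}^{\lambda}\frac{e^{t}-1}{t}\,dt\sim\frac{e^{\lambda}}{\lambda}\left(1+\frac{1}{\lambda}+\cdots\right)$), and a Poisson lower-tail (Chernoff) bound does make $f$ and $\partial_{r}f$ exponentially small near $r=\lambda/2$, with rate $\kappa=(1-\log 2)/2$; this is in fact stronger than the $1/\lambda$ bound with which the proof of Lemma \ref{LEM:f} begins.

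The genuine gap is \emph{localization}. By definition $\mathfrak{m}(\lambda)$ is the \emph{global} maximizer of $P(\cdot,\lambda)$, but your estimates hold only on $|r-\lambda/2|\le 2$, and your final step silently identifies $\mathfrak{m}(\lambda)$ with ``the root of $\phi$'' near the vertex. Nothing in the proposal excludes a maximum far from $\lambda/2$, where the perturbation is not small at all: for fixed $\lambda$, $-f(r,\lambda)=P(r,\lambda)-P^{\star}(r,\lambda)\to+\infty$ as $r\to\infty$, so $P$ is nowhere near the parabola outside the Chernoff regime. The gap is fixable but needs its own argument, for instance: for $r\le(1-\delta)\lambda$ the same tail bound gives $|f(r,\lambda)|\le\lambda^{2}e^{-\kappa_{\delta}\lambda}$, and then $P^{\star}\le P\le P^{\star}+\lambda^{2}e^{-\kappa_{\delta}\lambda}$ (recall $f\le 0$) together with the exact quadratic decay of $P^{\star}$ away from $\mathcal{M}^{\star}(\lambda)$ pins any maximizer in that range to within $\frac{1}{2}+o(1)$ of $\mathcal{M}^{\star}(\lambda)$; for $r\ge(1-\delta)\lambda$, the bound $\frac{2(i-r)r}{i(i-1)}\le\frac{2(i-r)}{i-1}$ plus Poisson concentration gives $P(r,\lambda)\le 2\delta+o(1)$, which for $\delta<1/4$ is eventually below $P(\lambda/2,\lambda)\to 1/2$, so no maximizer lives there. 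Note also that your stated target $\mathfrak{m}(\lambda)-\mathcal{M}^{\star}(\lambda)\to 0$ is unattainable as written, since $\mathfrak{m}(\lambda)$ is integer-valued while $\mathcal{M}^{\star}(\lambda)$ varies continuously; the correct provable statement is $|\mathfrak{m}(\lambda)-\mathcal{M}^{\star}(\lambda)|\le\frac{1}{2}+o(1)$, i.e.\ $\mathfrak{m}(\lambda)$ is an integer adjacent to $\mathcal{M}^{\star}(\lambda)$. Even so repaired, the argument yields only this asymptotic reading of the conjecture; the sharp identification $\mathfrak{m}(\lambda)=\lfloor\lambda/2-1\rfloor$ that the paper's numerics actually support remains out of reach of a perturbation estimate, as you rightly observe at the end.
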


\section{Payoff proportional to the number of performed or non-performed
interviews}

We now consider the same underlying game as in the previous sections, but
introducing different payoffs based on the number of observed objects. We
consider that, in the case of success (choosing the best or the worst object),
we shall receive a payoff based on the number of interviews carried out. The
aim is therefore to maximize the payoff. Although the goal is still to choose
the best or worst object, there are incentives (or penalties) for carrying out
more (or fewer) inspections of the objects than those that maximize the
probability of success found in Section 2, as higher payoffs will be obtained.
We first consider the case in which the payoff received is proportional to the
number of observed objects and then the case in which the payoff received is
proportional to the number of non-observed objects.

If we consider that the payoff for success in the game with $n$ available
objects after having carried out $k$ inspections is $g(k,n)$, the expected
payoff following the strategy of rejecting $r$ objects and choosing the first
observed object that is better or worse than those seen so far will thus be:%

\[
\mathbf{G}(r,n):=\frac{2\binom{r}{2}}{n} \sum_{k = r + 1}^{n} \frac
{g(k,n)}{\binom{k-1}{2}}%
\]
We consider two cases: the first, where $g(k,n)=\frac{k}{n}$, and the second,
where $g(k,n)=\frac{n-k}{n}$. In both cases, we obtain asymptotic formulas for
the optimal cutoff value and the expected payoff following the optimal strategy.

\subsection{Payoff proportional to the number of performed interviews}

\begin{teor}
Consider the function
\[
\mathbf{G}(r,n):=\frac{2\binom{r}{2}}{n^{2}} \sum_{k = r + 1}^{n} \frac
{k}{\binom{k-1}{2}}%
\]
defined for every pair of integers $(r,n)$ such that $1<r<n$. Let us denote by
$\mathcal{M}(n)$ the value for which the function $\mathbf{G}(\cdot,n)$
reaches its maximun. Then,

\begin{itemize}
\item[i)] $\displaystyle \lim_{n\rightarrow\infty}\frac{\mathcal{M}(n)}
{n}=\frac{1}{\sqrt{e}}$; i.e, $\mathcal{M}(n)\sim\frac{n}{\sqrt{e}}$.

\item[ii)] $\displaystyle \lim_{n\rightarrow\infty}\mathbf{G}\left(  \left[
\frac{n}{\sqrt{e}}\right]  ,n\right)  =\frac{1}{e}$.
\end{itemize}
\end{teor}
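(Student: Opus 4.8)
The plan is to reduce $\mathbf{G}(r,n)$ to a closed form and then recognise its limiting shape as $n\to\infty$, in exactly the spirit of Theorems 1 and 2. First I would clear the binomials, writing $\mathbf{G}(r,n)=\frac{2r(r-1)}{n^{2}}\sum_{k=r+1}^{n}\frac{k}{(k-1)(k-2)}$, and resolve the summand by partial fractions as $\frac{k}{(k-1)(k-2)}=\frac{2}{k-2}-\frac{1}{k-1}$. Splitting this as $\left(\frac{1}{k-2}-\frac{1}{k-1}\right)+\frac{1}{k-2}$ separates a telescoping piece from a harmonic piece, giving $\sum_{k=r+1}^{n}\frac{k}{(k-1)(k-2)}=\frac{1}{r-1}-\frac{1}{n-1}+\bigl(H_{n-2}-H_{r-2}\bigr)$, where $H_{m}:=\sum_{j=1}^{m}\frac{1}{j}$. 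This is the exact analogue of the telescoping computation in Theorem 1 i), now with a leftover harmonic tail $H_{n-2}-H_{r-2}=\sum_{j=r-1}^{n-2}\frac{1}{j}$.

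Next I would pass to the limit with $x:=\lim_{n\to\infty}r/n$. Multiplying the closed form by $\frac{2r(r-1)}{n^{2}}$, the contributions of the $\frac{1}{r-1}$ and $\frac{1}{n-1}$ terms are $O(1/n)$ and hence vanish, while $\frac{2r(r-1)}{n^{2}}\to 2x^{2}$. The harmonic difference $\sum_{j=r-1}^{n-2}\frac{1}{j}$ is, after normalising by $\frac1n$, a Riemann sum for $\int_{x}^{1}\frac{dt}{t}=-\log x$ (equivalently one uses $H_{m}=\log m+\gamma+o(1)$). This yields the limiting profile $\mathbf{G}(r,n)\to G(x):=-2x^{2}\log x$, the analogue here of the function $g$ from Theorem 2.

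The optimisation is then an elementary Calculus step, just as in Lemma \ref{L:max}. Differentiating, $G'(x)=-2x\bigl(2\log x+1\bigr)$, so on $(0,1)$ the unique critical point is $x=e^{-1/2}=1/\sqrt{e}$; since $G(x)\to 0$ as $x\to 0^{+}$ and $G(1)=0$ while $G>0$ on the interior, this is the absolute interior maximum, with value $G(1/\sqrt{e})=-2\,e^{-1}\bigl(-\tfrac12\bigr)=\frac{1}{e}$. Statement ii) then follows immediately, because $[\,n/\sqrt{e}\,]/n\to 1/\sqrt{e}$ forces $\mathbf{G}([\,n/\sqrt{e}\,],n)\to G(1/\sqrt{e})=\frac{1}{e}$.

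For statement i) I would argue, exactly as in Theorem 2 ii), that the discrete maximiser tracks the continuous one: since $\mathbf{G}(\cdot,n)$ attains its maximum at $\mathcal{M}(n)$ and the profiles converge to $G$, which has a unique strict interior maximum at $1/\sqrt{e}$, one obtains $\mathcal{M}(n)/n\to 1/\sqrt{e}$. The main obstacle is precisely making this \emph{argmax} convergence rigorous, since pointwise convergence of $\mathbf{G}(r,n)$ to $G(r/n)$ does not by itself locate the maximiser. The clean way is to note that the error terms collected above are uniformly $O(1/n)$ once the ratio $r/n$ is confined to a compact subset of $(0,1)$, giving uniform convergence there, and then to invoke the strictness of the maximum of $G$; the boundary ratios $x\to 0$ and $x\to 1$ are ruled out because $G\to 0$ there while $G(1/\sqrt{e})=1/e>0$. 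This is the same level of rigour adopted throughout Section 3, so I would present it at that level.
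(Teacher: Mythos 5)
Your proposal is correct and follows essentially the same route as the paper's own proof: the identical partial-fraction/telescoping reduction of the sum, the same Riemann-sum (or harmonic-number) passage to the limiting profile $g(x)=-2x^{2}\log x$, and the same elementary calculus step locating the maximum at $1/\sqrt{e}$ with value $1/e$. The only difference is that you are more careful about the \emph{argmax} convergence in part i) --- the paper simply asserts it ``follows immediately'' --- and your sketch of uniform convergence on compact subsets of $(0,1)$ plus strictness of the interior maximum is a legitimate way to make that step rigorous.
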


\begin{proof}
First of all, observe that
\[
\mathbf{G}(r,n)=\frac{2\binom{r}{2}}{n^{2}}\sum_{k=r+1}^{n}\frac{k}%
{\binom{k-1}{2}}=\frac{r(r-1)}{n^{2}}\sum_{k=r+1}^{n}\frac{2k}{(k-1)(k-2)}
\]
Now, using telescopic sums, we have that
\[
\sum_{k=r+1}^{n}\frac{2k}{(k-1)(k-2)}=2\sum_{k=r+1}^{n}\left(  \frac{2}%
{k-2}-\frac{1}{k-1}\right)  =2\left(  \frac{2}{r-1}-\frac{1}{n-1}\right)
+2\sum_{k=r+1}^{n-1}\frac{1}{k-1}%
\]
Thus,
\begin{align*}
\mathbf{G}(r,n)  &  =\frac{2r(r-1)}{n^{2}}\left[  \frac{2}{r-1}-\frac{2}%
{n-1}+\sum_{i=r}^{n-1}\frac{1}{i}\right]  =\frac{4r(n-r)}{n^{2}(n-1)}%
+\frac{2r(r-1)}{n^{2}}\sum_{i=r}^{n-1}\frac{1}{i}=\\
&  =\frac{4}{n-1}\frac{r}{n}\left(  1-\frac{r}{n}\right)  +2\frac{r}{n}\left(
\frac{r}{n}-\frac{1}{n}\right)  \sum_{i=r}^{n-1}\frac{n}{i}\frac{1}{n}%
\end{align*}
where the latter sum is a Riemann sum. Consequently,
\[
\lim_{n\rightarrow\infty}\mathbf{G}(r,n)\simeq2x^{2}\int_{x}^{1}\frac{1}%
{t}dt=-2x^{2}\log x=g(x)
\]
with $x=\displaystyle\lim_{n\rightarrow\infty}r/n$.

Now, it is an elementary Calculus exercise to see that the function
$g(x)=-2x^{2}\log x$ reaches it absolute maximum at the point
\[
\frac{1}{\sqrt{e}}=0.606531\dots
\]
Moreover, the value of this maximum is:
\[
g\left(  \frac{1}{\sqrt{e}}\right)  =\frac{1}{e}=0.367879\dots
\]

After this previous work we can proof the statements of the theorem.

\begin{itemize}
\item[i)] Since $\mathbf{G}(\cdot,n)$ reaches its maximum at $\mathcal{M}(n)$
and $x=\displaystyle\lim_{n\rightarrow\infty}r/n$, it follows immediately
that
\[
\lim_{n\rightarrow\infty}\frac{\mathcal{M}(n)}{n}=\frac{1}{\sqrt{e}}%
\]

\item[ii)] It is clear that
\[
\lim_{n\rightarrow\infty}\mathbf{G}\left(  \left[  \frac{n}{\sqrt{e}}\right]
,n\right)  =g\left(  \frac{1}{\sqrt{e}}\right)  =\frac{1}{e}%
\]
because $\displaystyle\lim_{n\rightarrow\infty}\left[  \frac{n}{\sqrt{e}
}\right]  /n=\frac{1}{\sqrt{e}}$.
\end{itemize}
\end{proof}

Recall that in Theorem 2 iv), an asymptotic formula was obtained for the
solution $\mathfrak{m}(n)=n\vartheta+\mathcal{O}(1)$ taking a series
expansion. This was possible because equations of type $0=A+Bx+C\log(x)$
are solvable in terms of Lambert $W$ Function. In this case, the equation that
appears is of the type $A+Bx+C\log(x)+Dx\log(x)=0$, for which there is no
known solution that can be expressed symbolically. Nonetheless, the
simplification of this equation achieved by replacing $r/n$ by its asymptotic
value, $1/\sqrt{e}$, gives rise to a formula that improves the estimation
$[n/\sqrt{e}]$ and, in this case, for $\mathcal{M}(n)$, we conjecture

\begin{con}%
\[
\mathcal{M}(n)\approx\frac{n}{\sqrt{e}} + \mu,
\]
where $\mu:= 2 - \frac{5}{2\,\sqrt{e}} = 0.483673350\dots$
\end{con}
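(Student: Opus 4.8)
The plan is to locate the maximiser of $\mathbf{G}(\cdot,n)$ first as a continuous problem and then pass to its integer counterpart. First I would rewrite the inner sum through the digamma function, using $\sum_{i=r}^{n-1}1/i=\psi(n)-\psi(r)$, so that (with the closed form already obtained in the theorem above)
\[
\mathbf{G}(r,n)=\frac{4r(n-r)}{n^{2}(n-1)}+\frac{2r(r-1)}{n^{2}}\left(\psi(n)-\psi(r)\right),
\]
and treat $r$ as a real variable. Setting $\partial_r\mathbf{G}=0$ and clearing $n^{2}/2$ yields the critical-point equation
\[
\frac{2(n-2r)}{n-1}+(2r-1)\left(\psi(n)-\psi(r)\right)-r(r-1)\,\psi'(r)=0.
\]
Inserting the standard expansions $\psi(r)=\log r-\tfrac{1}{2r}+\mathcal{O}(r^{-2})$ and $\psi'(r)=\tfrac1r+\mathcal{O}(r^{-2})$ turns this into an equation of the type $A+Br+C\log r+Dr\log r=0$ with $D\neq0$, which, as the authors stress, is not solvable in closed form through the Lambert $W$ function.

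To extract the straight-line asymptote I would substitute the ansatz $\rho(n)=\tfrac{n}{\sqrt{e}}+c+o(1)$ directly into the exact critical-point equation and expand. The $\mathcal{O}(n\log n)$ pieces cancel inside the term $(2r-1)(\psi(n)-\psi(r))$, and demanding that the $\mathcal{O}(n)$ coefficient vanish reproduces the leading ratio $1/\sqrt{e}$ of the preceding theorem (it is exactly the condition $2\log(1/\sqrt{e})+1=0$). Matching the surviving $\mathcal{O}(1)$ terms, the three groups contribute
\[
\left(2-\frac{4}{\sqrt{e}}\right)+\left(-\frac{1}{\sqrt{e}}-c+\frac12\right)+\left(-c+\frac12\right)=3-\frac{5}{\sqrt{e}}-2c,
\]
so that $c=\tfrac32-\tfrac{5}{2\sqrt{e}}$ and the continuous maximiser obeys $\rho(n)=\tfrac{n}{\sqrt{e}}+\bigl(\tfrac32-\tfrac{5}{2\sqrt{e}}\bigr)+o(1)$. (The authors' device of freezing $r/n$ at $1/\sqrt{e}$ inside the offending $Dr\log r$ term is a convenient way to manufacture an explicit finite-$n$ Lambert-$W$ surrogate from which one can read a line off; the asymptotic constant itself is most safely pinned down by the direct expansion above, which I have checked numerically, e.g. $\rho(100)\approx 60.64$.)

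The remaining step is the passage from the real maximiser to the integer one. Near its peak $\mathbf{G}(\cdot,n)$ inherits the strict concavity of the limiting profile $g(x)=-2x^{2}\log x$ identified in the previous theorem, so $\partial_r^{2}\mathbf{G}$ is negative and bounded away from $0$ there; the function is therefore unimodal and $\mathcal{M}(n)$ is the integer nearest to $\rho(n)$. Writing nearest-integer rounding as $\lfloor\,\cdot+\tfrac12\rfloor$ absorbs a half unit and gives
\[
\mathcal{M}(n)=\left[\rho(n)+\tfrac12\right]=\left[\frac{n}{\sqrt{e}}+\left(2-\frac{5}{2\sqrt{e}}\right)+o(1)\right],
\]
which is precisely $\mu=2-\tfrac{5}{2\sqrt{e}}$; the extra $+\tfrac12$ over the continuous constant is the same rounding correction that sharpens $[n/e]$ into the Gilbert--Mosteller estimate in the classical problem.

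The hard part, and the reason the statement is only a conjecture, is upgrading "$\approx$" to the exact identity $\mathcal{M}(n)=\bigl[\tfrac{n}{\sqrt{e}}+\mu\bigr]$ for \emph{every} $n$. This would require the fractional part of $\rho(n)$ to stay uniformly away from $\tfrac12$, so that the $o(1)$ error can never tip the rounding across a half-integer. Since $\rho(n)-n/\sqrt{e}$ tends to a fixed constant while $n/\sqrt{e}\bmod 1$ is expected to equidistribute, one anticipates $\{\rho(n)\}$ coming within $o(1)$ of $\tfrac12$ for infinitely many $n$, so no finite computation can exclude sporadic off-by-one exceptions, exactly as the authors note for the analogous formula in the classical problem. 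The defensible target is therefore the asymptotic statement $\mathcal{M}(n)=\tfrac{n}{\sqrt{e}}+\mu+o(1)$, with the exact-integer version resting on the numerical evidence.
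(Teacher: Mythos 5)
Your target here is a \emph{conjecture}: the paper offers no proof of it, only the remark that the stationarity equation has the form $A+Br+C\log r+Dr\log r=0$ (not solvable via Lambert $W$), the device of freezing $r/n$ at its limit $1/\sqrt{e}$ to obtain a solvable surrogate, and a numerical check up to $n=10000$. So there is no paper proof to match, and what you give is a genuinely different and more explicit heuristic in its support. Where the authors substitute $r/n=1/\sqrt{e}$ into the intractable term and read an asymptote off the resulting Lambert-$W$ formula, you expand the exact critical-point equation $\frac{2(n-2r)}{n-1}+(2r-1)\bigl(\psi(n)-\psi(r)\bigr)-r(r-1)\psi'(r)=0$ around the ansatz $r=n/\sqrt{e}+c$ and match coefficients. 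I have checked your three $O(1)$ groups, $2-\tfrac{4}{\sqrt{e}}$, $-\tfrac{1}{\sqrt{e}}-c+\tfrac12$ and $-c+\tfrac12$, and they are correct, so the continuous maximiser is indeed $\rho(n)=n/\sqrt{e}+\tfrac32-\tfrac{5}{2\sqrt{e}}+o(1)$ (numerically $\rho(10)\approx 6.05$, consistent with your check). The real advantage of your route is that it isolates \emph{where} the extra half-unit in $\mu$ comes from: it is purely the discrete-to-continuous rounding correction $\mu=c+\tfrac12$, in exact analogy with the Gilbert--Mosteller refinement, something the paper's surrogate sketch leaves opaque.

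Two caveats. First, your rounding step silently fixes the meaning of the paper's bracket: your identity $\mathcal{M}(n)=\lfloor\rho(n)+\tfrac12\rfloor=\lfloor n/\sqrt{e}+\mu\rfloor$ reproduces the conjectured $\mu$ only if $[\cdot]$ is read as the floor, whereas the paper uses $[\cdot]$ for the nearest integer elsewhere (Section 3). Your reading is the one consistent with the data: $\mathcal{M}(10)=6=\lfloor 6.549\ldots\rfloor$, while the nearest integer to $6.549\ldots$ is $7$; under the nearest-integer convention the correct constant would be your $c=\tfrac32-\tfrac{5}{2\sqrt{e}}$ itself rather than $\mu$. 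It is worth making this disambiguation explicit, since it also explains the paper's reported exceptions: at $n=33$ and $n=94$ the quantity $n/\sqrt{e}+\mu$ equals $20.4992\ldots$ and $57.4976\ldots$, i.e.\ sits just below a half-integer, which is dangerous only for nearest-integer rounding (a direct computation gives $\mathbf{G}(19,33)\approx0.38440$, $\mathbf{G}(20,33)\approx0.38629$, $\mathbf{G}(21,33)\approx0.38433$, so $\mathcal{M}(33)=20=\lfloor 33/\sqrt{e}+\mu\rfloor$ and the floor formula is not actually wrong there); under your floor reading the genuinely fragile $n$ are those with $n/\sqrt{e}+\mu$ within $o(1)$ of an \emph{integer}, which is the correct form of your equidistribution remark. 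Second, a small rescaling slip: $\partial_r^2\mathbf{G}$ is $O(n^{-2})$, so it is $n^2\partial_r^2\mathbf{G}$ that stays bounded away from zero (it tends to $g''(e^{-1/2})=-4$); the unimodality argument is unaffected. With these points repaired, your proposal is a correct derivation of the conjectured constant, appropriately hedged as to why it cannot be upgraded to an exact statement for all $n$.
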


In fact, $[\frac{n}{\sqrt{e}}+\mu]$ is the value of $\mathcal{M}(n)$ for all
$n$ up to 10000, with the only exceptions of $n\in\{33,94\}$.

\subsection{Payoff proportional to the number of non-performed interviews}

\begin{teor}
Consider the function
\[
G(r,n):=\frac{2\binom{r}{2}}{n^{2}}\sum_{k=r+1}^{n}\frac{n-k}{\binom{k-1}{2}}%
\]
defined for every pair of integers $(r,n)$ such that $1<r<n$ and let
\[
\theta:=-\frac{1}{2W_{_{-1}}(-\frac{1}{2\sqrt{e}})}=e^{\frac{1}{2} +
W_{-1}(\frac{-1}{2\,\sqrt{e}})}%
\]
be the solution to the equation $2x\log(x)=x-1$. Denote by $\mathcal{M}(n)$
the value for which the function $G(\cdot,n)$ reaches its maximum. Then the
following hold:
\begin{itemize}
\item[i)]
$\displaystyle\lim_{n\rightarrow\infty}\frac{\mathcal{M}(n)}{n}=\theta=
0.284668\dots$; i.e, $\mathcal{M}( n)\sim n \cdot\theta$.

\item[ii)]
$\displaystyle \lim_{n\rightarrow\infty}G([ n \cdot\theta],n)=\theta(1-\theta)=0.2036321\dots$
\end{itemize}
\end{teor}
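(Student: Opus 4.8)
The plan is to mirror the proof of the preceding theorem. First I would put $G(r,n)$ in closed form by extracting constants,
\[
G(r,n)=\frac{r(r-1)}{n^{2}}\sum_{k=r+1}^{n}\frac{2(n-k)}{(k-1)(k-2)},
\]
and decomposing the summand into a telescoping part plus a harmonic tail,
\[
\frac{n-k}{(k-1)(k-2)}=(n-2)\left(\frac{1}{k-2}-\frac{1}{k-1}\right)-\frac{1}{k-1}.
\]
Summing, the telescope collapses to $\frac{1}{r-1}-\frac{1}{n-1}$ while the remaining terms give $\sum_{i=r}^{n-1}\frac{1}{i}$, so that
\[
G(r,n)=\frac{2r(n-2)(n-r)}{n^{2}(n-1)}-2\,\frac{r}{n}\,\frac{r-1}{n}\sum_{i=r}^{n-1}\frac{n}{i}\,\frac{1}{n}.
\]

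Writing $x=\lim_{n\to\infty}r/n$, the first summand tends to $2x(1-x)$ and the last factor is a Riemann sum for $\int_{x}^{1}t^{-1}\,dt=-\log x$, so passing to the limit gives
\[
\lim_{n\to\infty}G(r,n)\simeq 2x(1-x)+2x^{2}\log x=:g(x).
\]
Maximising $g$ on $[0,1]$ is then elementary: since $g'(x)=2-2x+4x\log x$, the equation $g'(x)=0$ is equivalent to $2x\log x=x-1$, which is exactly the relation defining $\theta$. The substitution $x=e^{1/2+v}$ converts it into $v\,e^{v}=-\tfrac{1}{2\sqrt{e}}$, whose root with $v<-1$ is $v=W_{-1}(-\tfrac{1}{2\sqrt{e}})$; this yields the maximiser $\theta=e^{1/2+W_{-1}(-1/(2\sqrt{e}))}\in(0,e^{-1/2})$ and recovers the stated closed form, the other critical point $x=1$ being discarded since $g(1)=0<g(\theta)$.

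Part i) then follows as in the earlier theorems: $\mathcal{M}(n)$ maximises $G(\cdot,n)$, $g$ is the limit of $G$, and $x=\lim r/n$, whence $\lim_{n\to\infty}\mathcal{M}(n)/n=\theta$. For part ii) I would simplify $g(\theta)$ using the defining relation: multiplying $2\theta\log\theta=\theta-1$ by $\theta$ yields $2\theta^{2}\log\theta=\theta(\theta-1)$, so
\[
g(\theta)=2\theta(1-\theta)+\theta(\theta-1)=2\theta(1-\theta)-\theta(1-\theta)=\theta(1-\theta),
\]
and since $\lfloor n\theta\rfloor/n\to\theta$ we conclude $\lim_{n\to\infty}G(\lfloor n\theta\rfloor,n)=g(\theta)=\theta(1-\theta)$.

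The routine work is the partial-fraction bookkeeping and the one-variable optimisation. The one genuinely delicate point — common to every asymptotic theorem in the paper and, in keeping with their treatment, taken as justified here — is the inference from the pointwise convergence $G(r,n)\to g(x)$ to the convergence of the maximisers $\mathcal{M}(n)/n\to\theta$; a fully rigorous version would establish uniform convergence (or equicontinuity) of $G(\lfloor xn\rfloor,n)$ to $g(x)$ on $[0,1]$.
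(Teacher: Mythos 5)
Your proposal is correct and follows essentially the same route as the paper: the same telescoping/partial-fraction reduction to a closed form plus harmonic tail, the same Riemann-sum passage to $g(x)=2x(1-x+x\log x)$, and the same (heuristic) identification of the limit of the maximisers. In fact your write-up is slightly tighter than the paper's: your intermediate closed form for $G(r,n)$ has the correct signs (the paper's bracketed expression contains a sign typo, though its final limit is right), and you supply the Lambert-$W$ substitution, the branch selection discarding $x=1$, and the identity $g(\theta)=\theta(1-\theta)$, all of which the paper dismisses as ``easily seen.''
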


\begin{proof}
We reason like in the previous theorem. First, observe that
\begin{align*}
G(r,n)  &  =\frac{2r(r-1)}{n^{2}}\sum_{k=r+1}^{n}\frac{(n-k)}{(k-1)(k-2)}%
=\frac{2r(r-1)}{n^{2}}\left[  \frac{n-2}{r-1}-\frac{n}{n-1}+\sum_{i=r}%
^{n-1}\frac{1}{i}\right]  =\\
&  =2\frac{r}{n}\left(  1-\frac{2}{n}\right)  -2\frac{r}{n}\left(  \frac
{r}{n-1}-\frac{1}{n-1}\right)  -2\frac{r}{n}\left(  \frac{r}{n}-\frac{1}%
{n}\right)  \sum_{i=r}^{n-1}\frac{n}{i}\frac{1}{n},
\end{align*}
where the latter sum is a Riemann sum. Consequently,
\[
\lim_{n\rightarrow\infty}G(r,n)\simeq2x-2x^{2}-2x^{2}\int_{x}^{1}\frac{1}%
{t}dt=2x\left(  1-x+x\log x\right)  =g(x)
\]
with $x=\displaystyle\lim_{n\rightarrow\infty}r/n$. It is easily seen that
$g(x)=2x\left(  1-x+x\log x\right)  $ reaches it absolute maximum at the
point
\[
\theta=-\frac{1}{2W_{-1}(-\frac{1}{2\sqrt{e}})}=0.284668\dots
\]
Moreover, the value of this maximum is:
\[
g\left(  \theta\right)  =\theta- {\theta}^{2}= 0.203632\dots
\]

Now, we prove the statements of the theorem.

\begin{itemize}
\item[i)] Since $G(\cdot,n)$ reaches its maximum at $\mathcal{M}(n)$ and
$x=\displaystyle\lim_{n\rightarrow\infty}r/n$, it follows immediately that
\[
\lim_{n\rightarrow\infty}\frac{\mathcal{M}(n)}{n}=\theta
\]

\item[ii)] It is clear that
\[
\lim_{n\rightarrow\infty}G\left(  \left[  n\cdot\theta\right]  ,n\right)
=g\left(  \theta\right)  =\theta(1-\theta)
\]
because $\displaystyle\lim_{n\rightarrow\infty}\left[  n\cdot\theta\right]
/n=\theta$.
\end{itemize}
\end{proof}

Just like in the previous case, it has not been possible to determine the
asymptotic formula of the type $\mathcal{M}(n)\approx n\cdot\theta+\beta$.
Nevertheless, it is possible to conjecture that $\mathcal{M}(n)\approx
n\cdot\theta+\mathcal{O}(1)$. Moreover, we conjecture that $\mathcal{M}%
(n)\approx n\cdot\theta+\mu$, where $\mu\cong1.4034$ is an approximation that
has been experimentally found on the basis of the calculation of the solution
for large values of $n$. In this case, $[n\cdot\theta+\mu]$ is the exact value
of $\mathcal{M}(n)$ for all values of $n$ up to 10000, without any exception.

\begin{rem}
The constant $\theta=-\frac{1}{2W_{-1}(-\frac{1}{2\sqrt{e}})}=0.284668\dots$
also appears related to rumour theory \cite{RUMOR,ru} and to
Gabriel's Horn (see A101314 in OEIS).
\end{rem}

\section{Conclusions and future work}

In this paper, we have analyzed a very natural variant of the secretary
problem that has probably not received much attention because its asymptotic
analysis for the game with full information is very simple. However, its study
has been found to be of interest in the case of an unknown random number of objects
and with payoffs that are proportional to the number of inspected objects. We
show a comparative table of the asymptotic optimal stopping rule
$\mathfrak{m}(\cdot)$ and the $\mathbb{E}(\text{payoff})$ (the asymptotic mean
payoff) in the classical problem and in the best/worst variant studied in this paper.

All the constants that appear in the table can be expressed in terms of the
following three constants:
\[
e=2.71828\dots,\ \vartheta:=\frac{W(-2 e^{-2})}{2}=0.20318\dots,
\]
\[
\theta:=-\frac{1}{2W_{-1}(-\frac{1}{2\sqrt{e}})}=0.28466\dots.
\]
\[%
\begin{tabular}
[c]{|c|c|c|c|c|}\hline
& \multicolumn{2}{|c|}{Classic} & \multicolumn{2}{|c|}{Best or Worst}%
\\\cline{2-5}
& $\mathfrak{m}(\cdot)$ & $\mathbb{E}(\text{payoff})$ & $\mathfrak{m}(\cdot)$
& $\mathbb{E}(\text{payoff})$\\\hline
$n$ objects & $ne^{-1}$ & $e^{-1}$ & $n/2$ & 1/2\\\hline
$\lambda-$Poisson & $\lambda/e$ & $e^{-1}$ & $\lambda/2$ & 1/2\\\hline
Uniform $[1,n]$ & $ne^{-1/2}$ & $2e^{-2}$ & $n\vartheta$ & $2(\vartheta
-\vartheta^{2})$\\\hline
Payoff $=(n-k)/n$ & $n\vartheta$ & $\vartheta-\vartheta^{2}$ & $n\theta$ &
$\theta-{\theta}^{2}$\\\hline
Payoff $=k/n$ & $n/2$ & $1/4$ & $ne^{-1/2}$ & $e^{-1}$\\\hline
\end{tabular}
\]

Although we have not addressed its study, it is not unreasonable to conjecture
that the counterpart of the $1/e$-law of best choice \cite{law} in this game
is what we might call the $1/2$-law. Let us suppose that, independently of
each other, all applicants have the same arrival time density f on $[0,T]$ and
let F denote the corresponding arrival time distribution function, i.e.%

\[
F(t) = \int_{0}^{t} f(s)ds , \, 0 \le t \le T.
\]

The 1/2-law: Let $\tau$ be such that $F(\tau)=1/2$. Consider the strategy of
waiting and observing all applicants up to time $\tau$ and then choosing, if
possible, the first candidate after time $\tau$ who is better than all the
preceding ones.

Thus, this strategy, which we shall call the 1/2-strategy, has the following properties:

\begin{itemize}
\item[i)] It yields for all N a success probability of at least 1/2,
\item[ii)] It is the only strategy guaranteeing this lower success probability
bound, 1/2, and the bound is optimal,
\item[iii)] It chooses, if there is at least one applicant, none at all with a
probability of exactly 1/2.
\end{itemize}

Finally, we wish to draw attention a question that has not been solved in this
paper, namely that of how to establish the value of the constants that appear
in the asymptotic formula of the optimal cutoff of the problems posed in
Section 4. Although from a practical point of view, an approximation of these
constants with 4 digit accuracy seems to provide the correct solution (except,
perhaps, for some isolated values of $n$), it would be very interesting to
obtain them exactly as a solution of certain transcendental equations, as was
possible in Section 2 with the constant $\frac{1}{4-2e^{2-2\vartheta}}$, where
$\vartheta$ represents the rumour's constant.

\end{document}